\newtheorem{thm}{Theorem}[section]
\newtheorem{lem}{Lemma}[section]
\theoremstyle{definition}
\theoremstyle{remark}
\numberwithin{equation}{section}
\numberwithin{equation}{section}
\newcounter{saveeqn}
\newcommand{\eqnref}[1]{(\ref {#1})}
\newcommand{\Bn}{\mathbf{n}}
\newcommand{\Bz}{\mathbf{z}}
\newcommand{\Bx}{\mathbf{x}}
\newcommand{\By}{\mathbf{y}}
\newcommand{\Gs}{\sigma}
\newcommand{\Acal}{\mathcal{A}}
\newcommand{\Kcal}{\mathcal{K}}
\newcommand{\Scal}{\mathcal{S}}
\newcommand{\Ocal}{\mathcal{O}}
\newcommand{\ds}{\displaystyle}
\newcommand{\la}{\langle}
\newcommand{\ra}{\rangle}
\newcommand{\RR}{\mathbb{R}}
\newcommand{\p}{\partial}
\newcommand{\beq}{\begin{equation}}
\newcommand{\eeq}{\end{equation}}
\title[Sharp estimates of electric fields in EIT]{Sharp estimate of electric field from a conductive rod and application in EIT}
\author{Xiaoping Fang}
\address{School of Mathematics and Statistics, Key Laboratory of Hunan Province for Statistical Learning and Intelligent Computation, Hunan University of Technology and Business, Changsha 410205, China}
\email{fxp1222@163.com}
\author{Youjun Deng}
\address{School of Mathematics and Statistics, Central South University, Changsha, Hunan, China.}
\email{youjundeng@csu.edu.cn, dengyijun\_001@163.com}
\author{Hongyu Liu}
\address{Department of Mathematics, City University of Hong Kong, Kowloon, Hong Kong SAR, China.}
\email{hongyu.liuip@gmail.com; hongyliu@cityu.edu.hk}
\date{} 
\begin{document}
\maketitle

\begin{abstract}
We are concerned with the quantitative study of the electric field perturbation due to the presence of an inhomogeneous conductive rod embedded in a homogenous conductivity. We sharply quantify the dependence of the perturbed electric field on the geometry of the conductive rod. In particular, we accurately characterise the localisation of the gradient field (i.e. the electric current) near the boundary of the rod where the curvature is sufficiently large. We develop layer-potential techniques in deriving the quantitative estimates and the major difficulty comes from the anisotropic geometry of the rod. The result complements and sharpens several existing studies in the literature. It also generates an interesting application in EIT (electrical impedance tomography) in determining the conductive rod by a single measurement, which is also known as the Calder\'on's inverse inclusion problem in the literature.

\medskip

\noindent{\bf Keywords:}~~conductivity equation, rod inclusion, Neumann-Poincar\'e operator, asymptotic analysis, electrical impedance tomography, single measurement.

\noindent{\bf 2010 Mathematics Subject Classification:}~~35Q60, 35J05, 31B10, 78A40

\end{abstract}

\section{Introduction}

\subsection{Mathematical setup}\label{sect:1.1}
Initially focusing on the mathematics, but not the physics, we consider the following elliptic PDE system in $\RR^2$:
\beq\label{eq:helm01}
\left\{
\begin{split}
\nabla\cdot \Big(\Gs(\Bx) \nabla u (\Bx)\Big) = 0, & \quad \Bx=(x_1,x_2) \in \RR^2,\\
u(\Bx)-H(\Bx)=\Ocal(|\Bx|^{-1}), &\quad |\Bx|\rightarrow \infty,
\end{split}
\right.
\eeq
where $u(\Bx)\in H_{loc}^1(\mathbb{R}^2)$ is a potential field, and $\Gs(\Bx)$ is of the following form:
\beq\label{eq:defGs}
\Gs:= (\Gs_0-1)\chi(D)+ 1,\ \ \Gs_0\in\mathbb{R}_+\ \mbox{and}\ \Gs_0\neq 1,
\eeq
with $D$ being a bounded domain with a connected complement $\mathbb{R}^2\backslash\overline{D}$. $H(\Bx)$ in \eqnref{eq:helm01} is a (nontrivial) harmonic function in $\mathbb{R}^2$, which stands for a background potential. We are mainly concerned with the quantitative properties of the solution $u(\Bx)$ in \eqref{eq:helm01} and in particular, its dependence on the geometry of $D$. To that end, we next introduce the rod-geometry of $D$ for our subsequent study. Let $\Gamma_0$ be a straight line of length $L\in\mathbb{R}_+$ with $\Gamma_0=(x_1, 0)$, $x_1\in (-L/2, L/2)$. Let $\Bn:=(0,1)$ and define the two points $P:=(-L/2, 0)$ and $Q:=(L/2, 0)$. Then the rod $D$ is introduced as $\overline{D}=\overline{D^a}\cup \overline{D^f} \cup \overline{D^b}$, where $D^f$ is defined by
\beq
D^f:=\{\Bx;\ \Bx=\Gamma_0\pm t \Bn, \ t\in (-\delta, \delta)\},\ \ \delta\in\mathbb{R}_+.
\eeq
The two end-caps $D^a$ and $D^b$ are two half disks with radius $\delta$ and centering at $P$ and $Q$, respectively. More precisely,
$$
D^a=\{\Bx;\ |\Bx-P|< \delta, \ x_1< -L/2\}, \quad D^b=\{\Bx;\ |\Bx-Q|< \delta, \ x_1> L/2\}.
$$
It can be verified that $D$ is of class $C^{1,\alpha}$ for a certain $\alpha\in\mathbb{R}_+$. In what follows, we define $S^c:=\p D^c=\p (D^a\cup D^b)$, and $S^f:=\p D^f$. Specially, $S^f=\Gamma_1\cup \Gamma_2$, where $\Gamma_j$, $j=1, 2$ are defined by
\beq
\Gamma_1=\{\Bx;\ \Bx=\Gamma_0-\delta \Bn\}, \quad \Gamma_2=\{\Bx;\ \Bx=\Gamma_0+\delta \Bn\}.
\eeq
Moreover, we shall always use $\Bz_x$ and $\Bz_y$ to signify the projections of $\Bx\in S^f$ and $\By\in S^f$ on $\Gamma_0$, respectively.

The elliptic PDE system \eqref{eq:helm01} describes the perturbation of an electric field $H(\Bx)$ due to the presence of a conductive body $D$. $u(\Bx)$ signifies the electric potential and $\sigma(\Bx)$ signifies the conductivity of the space. The homogeneous background space possesses a conductivity being normalized to be 1, whereas the conductive rod $D$ possesses an inhomogeneous conductivity being $\sigma_0$. The perturbed electric potential is $u-H$ and the gradient field $\nabla(u-H)$ is the corresponding electric current.

\subsection{Background discussion and literature review}

The conductivity equation \eqref{eq:helm01} is a fundamental problem in many existing studies. It is the master equation for the electrical impedance tomography (EIT) which is an important medical imaging modality and it can also find important applications in materials science; see e.g. \cite{HK04, HK07:book, Bor, Uhl} and the references cited therein. There are rich results in the literature devoted to the quantitative properties of the solution to \eqref{eq:helm01} and its geometric relationship to the conductive inclusion $D$. In this work, we shall derive an accurate characterisation of the perturbed electric field $u-H$ and its dependence on the geometry of $D$. There are mainly two motivations for our study as described in what follows.

First, in \cite{AGKP11,ASZZ15,BMVo03,KhZr11}, the authors studied the electric field perturbation from thin/slender structures, which are originated from the study of imaging crack defects in EIT \cite{AGKP11, ASZZ15, Ron05}. This is closely related to the current study. Indeed, the geometric setup in the aforementioned works are more general than the one considered in the present article. However, with the specific rod-geometry, we can derive an accurate characterisation of the perturbed electric field and its geometric dependence on $D$. In fact, in our asymptotic formula of the electric field $u-H$ (with respect to $\delta\ll 1$), the leading-order term is exact and can be used to fully decode $D$. This is in a sharp contrast to the existing studies mentioned above, which inevitably involve some qualitative estimates due to the more general geometries. The rod-geometry, though special, also possesses several local features that are worth our investigation, which is the second motivation of our study as described below.

Clearly, the studies mentioned above are mainly concerned with extracting the global geometry of $\partial D$ from the perturbed electric field $u-H$. On the other hand, there are studies on the relationships between the local geometry of $\partial D$ and the perturbed field $u-H$. In fact, there are classical results concerning the singularities of the solution near a boundary corner point \cite{Gris}. Roughly speaking, if $\partial D$ possesses a corner, then the solution $u$ locally around the corner point can be decomposed into the sum of a regular part and a singular part. Such a qualitative property has been used to establish novel uniqueness and stability results for the Calder\'on inverse inclusion problem in EIT by a single partial boundary measurement \cite{CDL,LB,LT}. The Calder\'on inverse inclusion problem is a longstanding problem in the literature and we shall present more background discussion in Section 4. In \cite{CHL}, the corner singularities of a conductive inclusion have been characterized in terms of the generalised polarisation tensors associated with the electric potential $u$, and the results are directly applied to EIT. Recently, in \cite{LTY}, the authors consider the case that $\partial D$ is smooth, but possesses high-curvature points. In two dimensions, a high-curvature point means that the extrinsic curvature of the boundary curve $\partial D$ at that point is sufficiently large. It is shown in \cite{LTY} that the quantitative property of $\nabla u$ around the high-curvature point enables one to recover the local part of $\partial D$ around that high-curvature point. However, the sharp curvature-dependence of $\nabla u$ in \cite{LTY} is established through numerically refining the upper-bound estimate in \cite{DEF}. As mentioned before, the rod-geometry possesses a few interesting local features that consolidate the numerical study in \cite{LTY}. First, it is geometrically anisotropic where the two dimensions are of different scales. In fact, the curvature at the two end-caps (i.e. $S^c=\partial D^a\cup \partial D^b$) is $\delta^{-1}\geq 1$, whereas the curvature at the facade part (i.e. $S^f$) is $0$. Hence, the rod-geometry, though special, provides rich insights on the curvature dependence of the electric field with respect to the shape of the conductive inclusion. In fact, we shall see that the perturbed electric energy is localized at the two end-caps of the rod. Similar to \cite{LTY}, the result enables us to rigorously justify that one can uniquely determine the conductive rod by a single measurement in EIT. It is emphasized that in three dimensions or in the case that the rod is curved, the situation would become much more complicated. Hence, we mainly consider the case with a straight rod in the two dimensions. Nevertheless, even in such a case, the mathematical analysis is technically involved and highly nontrivial. We shall develop layer potential techniques to tackle the problem. It turns out that the so-called Neumann-Poincar\'e (NP) operator shall play a critical role in our analysis. We would like to mention that the NP operator and its spectral properties have received considerable attentions recently in the literature due to its important applications in several intriguing fields of mathematical physics, including plasmon resonances and invisibility cloaking \cite{ACKL13,ACKL14,AKa16,AKLi16,DLLi18,DLLi182,DLZ20,FDL15,HKLi17,KKLS16,KLY17,LL17}. Finally, we would also like to mention in passing that more general rod-geometries were also considered in the literature in different contexts of physical importance \cite{DLU7,DLU72,LLRU}.

The rest of the paper is organized as follows. In Section 2, we derive several auxiliary results and in Section 3, we present the main results on the quantitative analysis of the solution $u$ to \eqref{eq:helm01} with respect to the geometry of the inclusion $D$. Finally, we consider in Section 4 the application of the quantitative result derived in Section 3 to Calder\'on's inverse inclusion problem.

\section{Some auxiliary results}

In this section, we shall establish several auxiliary results for our subsequent use. We first present some preliminary knowledge on the layer potential operators for solving the conductivity problem \eqref{eq:helm01}, and we also refer to \cite{HK07:book, Ned} for more related results and discussions.

\subsection{Layer potentials}
Let $ G$ be the radiating fundamental solution to the Laplacian $\Delta$ in $\RR^2$, which is given by
\begin{equation}
\label{Gk} \ds G(\Bx) =\frac{1}{2\pi}\ln|\Bx|,
\end{equation}
For any bounded Lipschitz domain $B\subset \RR^2$, we denote by $\Scal_B: L^2(\p B)\rightarrow H^{1}(\RR^2\setminus\p B)$ the single-layer potential operator given by
\beq\label{eq:layperpt1}
\Scal_{B}[\phi](\Bx):=\int_{\p B}G(\Bx-\By)\phi(\By)\; d s_\By,
\eeq
and $\Kcal_B^*: H^{-1/2}(\p B)\rightarrow H^{-1/2}(\p B)$ the Neumann-Poincar\'e (NP) operator:
\beq\label{eq:layperpt2}
\Kcal_{B}^{*}[\phi](\Bx):=\mbox{p.v.}\quad\frac{1}{2\pi}\int_{\p B}\frac{\la \Bx-\By, \nu_\Bx\ra}{|\Bx-\By|^2}\phi(\By)\; d s_\By,
\eeq
where p.v. stands for the Cauchy principle value. In \eqref{eq:layperpt2} and also in what follows, unless otherwise specified, $\nu$ signifies the exterior unit normal vector to the boundary of the domain concerned.
It is known that the single-layer potential operator $\Scal_B$ is continuous across $\p B$ and satisfies the following trace formula
\beq \label{eq:trace}
\frac{\p}{\p\nu}\Scal_B[\phi] \Big|_{\pm} = \Big(\pm \frac{1}{2}I+
\Kcal_{B}^*\Big)[\phi] \quad \mbox{on} \quad \p B, \eeq
where $\frac{\p }{\p \nu}$ stands for the normal derivative and the subscripts $\pm$ indicate the limits from outside and inside of a given inclusion $B$, respectively.

By using the layer-potential techniques, one can readily find the integral solution to \eqnref{eq:helm01} by
\beq\label{eq:intsol01}
u= H + \Scal_D[\varphi],
\eeq
where the density function $\varphi\in H^{-1/2}(\p D)$ is determined by
\beq\label{eq:intsol02}
\varphi= \Big(\lambda I- \Kcal_D^*\Big)^{-1}\Big[\frac{\p H}{\p \nu}\Big|_{\p D}\Big].
\eeq
Here, the constant $\lambda$ is defined by
$$
\lambda:=\frac{\Gs_0+1}{2(\Gs_0-1)}.
$$

\subsection{Asymptotic expansion of the Neumann-Poincar\'e operator}
In what follows, we always suppose that $\delta\ll 1$.
We shall present some asymptotic expansions of the Neumann-Poincar\'e operator with respect to $\delta$.
Recalling that $\p D=S^a \cup \overline{S^f} \cup S^b$, we decompose the Neumann-Poincar\'e operator into several parts accordingly. To that end, we introduce the following
boundary integral operator:
\beq\label{eq:defbnd01}
\Kcal_{\mathcal{S},\mathcal{S}'}[\phi](\Bx):=\chi(\mathcal{S}')\frac{1}{2\pi}\int_{\mathcal{S}} \frac{\la \Bx-\By, \nu_x\ra}{|\Bx-\By|^2}\phi(\By)ds_\By, \quad for\quad \mathcal{S}\cap \mathcal{S}'=\emptyset.
\eeq
It is obvious that $\Kcal_{\Scal,\Scal'}$ is a bounded operator from $L^2(\Scal)$ to $L^2(\Scal')$. For the case $\Scal=S^c$, we mean $S^c=S^a\cup S^b$. In what follows, we define $S_1^a$ and $S_1^b$ by
\beq
S_1^a=\{\Bx;\ |\Bx-P|= 1, \ x_1< -L/2\}, \quad S_1^b=\{\Bx;\ |\Bx-Q|= 1, \ x_1> L/2\}.
\eeq
For the subsequent use, we also introduce the following regions:
\begin{align}\label{}
\iota_{\delta}(P):=\{\Bx;\ |P-\Bz_{x}|=\Ocal(\delta),\ \Bx\in S^f\},\\
\iota_{\delta}(Q):=\{\Bx;\ |Q-\Bz_{x}|=\Ocal(\delta),\ \Bx\in S^f\}.
\end{align}
Define $\tilde\phi(\tilde\Bx):=\phi(\Bx)$, where $\Bx\in S^a, S^b$ and $\tilde\Bx\in S_1^a, S_1^b$.

We can prove the following result on the asymptotic expansion of the Neumann-Poincar\'e operator.

\begin{lem}
The Neumann-Poincar\'e operator $\Kcal_D^*$ admits the following asymptotic expansion:
\beq\label{eq:leasyNP01}
\Kcal_D^*[\phi](\Bx)= \Kcal_0[\phi](\Bx) +\delta\Kcal_1[\phi](\Bx)+ \Ocal(\delta^2),
\eeq
where $\Kcal_0$ is defined by
\beq\label{eq:leasyNP02}
\begin{split}
\Kcal_0[\phi](\Bx)& =\chi(S^a)\Big( \Kcal_{S^f, S^a}[\phi](\Bx)+\frac{1}{4\pi}\int_{S_1^a} \tilde\phi(\By)\Big)+
\chi(S^b)\Big( \Kcal_{S^f, S^b}[\phi](\Bx)+\frac{1}{4\pi}\int_{S_1^b} \tilde\phi\Big) \\
&+\mathcal{A}_{\Gamma_2, \Gamma_1}[\phi]+ \mathcal{A}_{\Gamma_1, \Gamma_2}[\phi]+ \chi(\iota_{\delta}(P))\Kcal_{S^a, S^f}[\phi](\Bx)+\chi(\iota_{\delta}(Q))\Kcal_{S^b, S^f}[\phi](\Bx),
\end{split}
\eeq
and
\beq\label{eq:leasyNP03}
\begin{split}
\Kcal_1[\phi]=& \chi(S^b)\frac{\la \Bx-P, \nu_x\ra}{2\pi|\Bx-P|^2}\int_{S_1^a} \tilde\phi+\chi(S^a)\frac{\la \Bx-Q, \nu_x\ra}{2\pi|\Bx-Q|^2}\int_{S_1^b} \tilde\phi\\
&+ \chi(S^f\setminus\iota_{\delta}(P))\left(\frac{\delta}{|\Bx-P|^2}\int_{S_1^a}(1-\la \tilde\By- P, \nu_x\ra) \tilde\phi(\tilde\By)ds_{\tilde\By}+o\Big(\frac{\delta}{|\Bx-P|^2}\Big)\right)\\
&+\chi(S^f\setminus\iota_{\delta}(Q))\left(\frac{\delta}{|\Bx-Q|^2}\int_{S_1^b}(1-\la \tilde\By- Q, \nu_x\ra) \tilde\phi(\tilde\By)ds_{\tilde\By}+o\Big(\frac{\delta}{|\Bx-P|^2}\Big)\right).
\end{split}
\eeq
Here, the operators $\mathcal{A}_{\Gamma_1, \Gamma_2}$ and $\mathcal{A}_{\Gamma_2, \Gamma_1}$ are defined by
\beq\label{eq:leasyNP0301}
\begin{split}
\mathcal{A}_{\Gamma_1, \Gamma_2}[\phi](\Bx)=&\frac{1}{\pi}\chi(\Gamma_2)\int_{\Gamma_1}\frac{\delta}{|\Bx-\By|^2}\phi(\By)ds_\By, \\
 \mathcal{A}_{\Gamma_2, \Gamma_1}[\phi](\Bx)=&\frac{1}{\pi}\chi(\Gamma_1)\int_{\Gamma_2}\frac{\delta}{|\Bx-\By|^2}\phi(\By)ds_\By.
\end{split}
\eeq
\end{lem}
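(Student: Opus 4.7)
The plan is to decompose $\Kcal_D^*[\phi](\Bx)$ according to which of the three subsets $S^a$, $\overline{S^f}$, $S^b$ the evaluation point $\Bx$ and the integration point $\By$ belong to, producing nine source-target pieces, and then to identify the asymptotic behavior of each piece in $\delta$. The key rescalings are $\Bx = P + \delta\tilde\Bx$, $\By = P + \delta\tilde\By$ (and analogously at $Q$), which map the end-caps onto the fixed unit half-circles $S_1^a$, $S_1^b$ and carry $\phi$ to $\tilde\phi$, together with the parametrization $\Bx = (x_1,\pm\delta)$ of the facade $\Gamma_{1,2}$. The bookkeeping is simply to put each piece of size $\Ocal(1)$ into $\Kcal_0$, each piece of size $\Ocal(\delta)$ into $\delta\Kcal_1$, and absorb the rest into the $\Ocal(\delta^2)$ remainder.

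Four of the nine pieces can be evaluated essentially explicitly. For $\Bx,\By$ on the same end-cap, both points lie on a circle of radius $\delta$, so the classical identity $\langle \Bx-\By,\nu_\Bx\rangle/|\Bx-\By|^2 = 1/(2\delta)$ collapses the integral; rescaling yields the constant-kernel term $\frac{1}{4\pi}\int_{S_1^a}\tilde\phi$ (and its $S^b$ counterpart). For $\Bx,\By$ on the same flat segment $\Gamma_j$, the vector $\Bx-\By$ is tangent to $\Gamma_j$ while $\nu_\Bx$ is normal, so the kernel vanishes identically. For $\Bx,\By$ on opposite flat segments one computes $\langle \Bx-\By,\nu_\Bx\rangle = 2\delta$ exactly, producing precisely $\Acal_{\Gamma_1,\Gamma_2}$ and $\Acal_{\Gamma_2,\Gamma_1}$ as in \eqref{eq:leasyNP0301}. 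For $\Bx$ and $\By$ on opposite end-caps, the separation $|\Bx-\By|\ge L-\Ocal(\delta)$ is bounded below; Taylor-expanding the kernel about $(\Bx,Q)$ or $(P,\By)$ and using $ds_\By = \delta\,ds_{\tilde\By}$ produces the $\Ocal(\delta)$ contribution $\delta\cdot\frac{\langle\Bx-Q,\nu_\Bx\rangle}{2\pi|\Bx-Q|^2}\int_{S_1^b}\tilde\phi$ and its mirror, which enter $\delta\Kcal_1$.

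The delicate pieces are the four end-cap $\leftrightarrow$ facade interactions, which must be split according to whether the facade point lies in the transition region $\iota_\delta(P)\cup\iota_\delta(Q)$. For $\Bx$ on an end-cap, the whole integral $\Kcal_{S^f,S^a}[\phi](\Bx)$ is of size $\Ocal(1)$ uniformly in $\delta$, and is kept as such in $\Kcal_0$; likewise for $S^b$. For $\Bx\in\iota_\delta(P)$ on the facade, no convergent expansion is available since $|\Bx-P|$ and $\delta$ are of the same order, so $\chi(\iota_\delta(P))\Kcal_{S^a,S^f}[\phi]$ is also placed in $\Kcal_0$. On the complementary far regime $\Bx\in S^f\setminus\iota_\delta(P)$, the geometric fact that $\nu_\Bx = (0,\pm 1)$ on $\Gamma_j$ gives $\langle \Bx-P,\nu_\Bx\rangle = \pm\delta$; substituting $\By = P+\delta\tilde\By$ and Taylor-expanding $|\Bx-\By|^2 = |\Bx-P|^2\bigl(1+\Ocal(\delta/|\Bx-P|)\bigr)$ reduces the integrand to $\delta(1-\langle\tilde\By-P,\nu_\Bx\rangle)/|\Bx-P|^2$ to leading order, and the resulting $\delta^2/|\Bx-P|^2$-sized contribution is exactly what sits inside $\delta\Kcal_1$ in \eqref{eq:leasyNP03}.

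The main obstacle is controlling the remainder uniformly across the transition. The relative error in the far-field expansion is $\Ocal(\delta/|\Bx-P|)$, so the absolute error is $\Ocal(\delta^3/|\Bx-P|^3)$, which when integrated over $S^f\setminus\iota_\delta(P)$ down to the cutoff $|\Bx-P|\sim\delta$ gives a total bound of order $\Ocal(\delta^2)$; matching the analogous bound for $\iota_\delta(P)$ itself requires that the cutoffs be chosen exactly at the $\delta$-scale, which also fixes which boundary integrals must appear in $\Kcal_0$ versus $\delta\Kcal_1$. Once this is verified, collecting all nine pieces according to their orders produces the claimed expansion \eqref{eq:leasyNP01}--\eqref{eq:leasyNP0301}.
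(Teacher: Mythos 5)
Your proposal is correct and follows essentially the same route as the paper: the paper's proof splits $\Kcal_D^*$ into $A_1$ (source on $S^f$) and $A_2$ (source on $S^c$) and then subdivides by the location of $\Bx$, which amounts to exactly your nine source--target pieces, and it uses the same key identities (the constant circle kernel on a single cap, the vanishing of the kernel on a single flat segment, the exact $\langle\Bx-\By,\nu_\Bx\rangle=2\delta$ between opposite segments, and Taylor expansion about $P$, $Q$ for the far interactions with the $\iota_\delta$ cutoff). Your closing remark on uniform control of the remainder across the transition region is, if anything, slightly more explicit than what the paper records.
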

\begin{proof}
First, one has the following separation:
\beq
\begin{split}
\Kcal_D^*[\phi](\Bx)=&\frac{1}{2\pi}\int_{S^f} \frac{\la \Bx-\By, \nu_x\ra}{|\Bx-\By|^2}\phi(\By)ds_\By+\frac{1}{2\pi}\int_{S^a\cup S^b} \frac{\la \Bx-\By, \nu_x\ra}{|\Bx-\By|^2}\phi(\By)ds_\By\\
=: & A_1[\phi](\Bx)+ A_2[\phi](\Bx).
\end{split}
\eeq
Note that for $\Bx, \By\in \Gamma_j$, $j=2, 3$, one can easily obtain that $\la \Bx-\By, \nu_x\ra=0$. Thus one has
\beq\label{eq:leasy01}
\begin{split}
& A_1[\phi](\Bx)=\frac{1}{2\pi}\int_{S^f} \frac{\la \Bx-\By, \nu_x\ra}{|\Bx-\By|^2}\phi(\By)ds_\By\\
=&\chi(S^a\cup S^b)\frac{1}{2\pi}\int_{S^f} \frac{\la \Bx-\By, \nu_x\ra}{|\Bx-\By|^2}\phi(\By)ds_\By\\
&+
 \chi(\Gamma_1)\frac{1}{2\pi}\int_{\Gamma_2}\frac{\la (\Bx-2\delta\nu_x-\By)+2\delta\nu_x, \nu_x\ra}{|\Bx-\By|^2}\phi(\By)ds_\By\\
 &+\chi(\Gamma_2)\frac{1}{2\pi}\int_{\Gamma_1}\frac{\la (\Bx-2\delta\nu_x-\By)+2\delta\nu_x, \nu_x\ra}{|\Bx-\By|^2}\phi(\By)ds_\By\\
=&\Kcal_{S^f,S^c}[\phi](\Bx) + \delta\chi(\Gamma_1)\frac{1}{\pi}\int_{\Gamma_2}\frac{1}{|\Bx-\By|^2}\phi(\By)ds_\By+\delta\chi(\Gamma_2)\frac{1}{\pi}\int_{\Gamma_1}\frac{1}{|\Bx-\By|^2}\phi(\By)ds_\By.
\end{split}
\eeq
On the other hand,
\beq\label{eq:leasy02}
\begin{split}
&A_2[\phi](\Bx)=\frac{1}{2\pi}\int_{S^a\cup S^b} \frac{\la \Bx-\By, \nu_x\ra}{|\Bx-\By|^2}\phi(\By)ds_\By\\
=&\chi(S^a)\frac{1}{2\pi}\int_{S^a} \frac{\la \Bx-\By, \nu_x\ra}{|\Bx-\By|^2}\phi(\By)ds_\By+
 \chi(S^b)\frac{1}{2\pi}\int_{S^b} \frac{\la \Bx-\By, \nu_x\ra}{|\Bx-\By|^2}\phi(\By)ds_\By\\
 &+\chi(S^a)\frac{1}{2\pi}\int_{S^b}\frac{\la \Bx-\By, \nu_x\ra}{|\Bx-\By|^2}\phi(\By)ds_\By+\chi(S^b)\frac{1}{2\pi}\int_{S^a}\frac{\la \Bx-\By, \nu_x\ra}{|\Bx-\By|^2}\phi(\By)ds_\By\\
 &+\chi(S^f)\frac{1}{2\pi}\int_{S^a\cup S^b}\frac{\la \Bx-\By, \nu_x\ra}{|\Bx-\By|^2}\phi(\By)ds_\By\\
=&\chi(S^a)\frac{1}{4\pi}\int_{S_1^a} \tilde\phi(\tilde\By)ds_{\tilde\By}+\chi(S^b)\frac{1}{4\pi}\int_{S_1^b} \tilde\phi(\tilde\By)ds_{\tilde\By} + \Kcal_{S^b, S^a}[\phi]+\Kcal_{S^a, S^b}[\phi]+\Kcal_{S^c, S^f}[\phi].
\end{split}
\eeq
For $\By\in S^a$ and $\Bx\in S^b$, by using Taylor's expansions one has
\beq\label{eq:leasy03}
|\Bx-\By|=|\Bx-P-(\By-P))|=|\Bx-P-\delta(\tilde\By-P)|=|\Bx-P|+\delta \la \Bx-P, \tilde\By-P\ra +\Ocal(\delta^2).
\eeq
Thus one has
\beq\label{eq:leasy04}
\Kcal_{S^a, S^b}[\phi](\Bx)=\delta\frac{\la \Bx-P, \nu_x\ra}{2\pi|\Bx-P|^2}\int_{S_1^a}\tilde\phi + \Ocal(\delta^2).
\eeq
Similarly, one can obtain
\beq\label{eq:leasy05}
\Kcal_{S^b, S^a}[\phi](\Bx)=\delta\frac{\la \Bx-Q, \nu_x\ra}{2\pi|\Bx-Q|^2}\int_{S_1^b}\tilde\phi + \Ocal(\delta^2).
\eeq
For $\Bx\in S^f$ $\By\in S^a$, by direct computations, one can obtain
$$
\Kcal_{S^a, S^f}[\phi](\Bx)=\delta^2\int_{S_1^a}\frac{1-\la \tilde\By- P, \nu_x\ra}{|\Bx-P|^2-2\delta\la \Bx-P, \tilde\By- P\ra+\delta^2} \tilde\phi(\tilde\By)ds_{\tilde\By}.
$$
We decompose $S^f= (S^f\setminus \iota_{\delta}(P))\cup \iota_{\delta}(P)$, then one has
\beq\label{eq:leasy07}
\Kcal_{S^a, S^f}[\phi](\Bx)=\frac{\delta^2}{|\Bx-P|^2}\int_{S_1^a}(1-\la \tilde\By- P, \nu_x\ra) \tilde\phi(\tilde\By)\,ds_{\tilde\By}+o\Big(\frac{\delta^2}{|\Bx-P|^2}\Big), \quad \Bx\in S^f\setminus\iota_{\delta}(P).
\eeq
Similarly, one can derive the asymptotic expansion for $\Kcal_{S^b, S^f}$.
By substituting \eqnref{eq:leasy04}-\eqnref{eq:leasy07} back into \eqnref{eq:leasy02} and combining \eqnref{eq:leasy01} one finally achieves \eqnref{eq:leasyNP01}, which completes the proof.

The proof is complete.
\end{proof}

\begin{lem}
The operators $\Acal_{\Gamma_j,\Gamma_k}$, $\{j,k\}=\{1, 2\}, \{2, 1\}$, defined in \eqnref{eq:leasyNP0301} are bounded operators from $L^2(\Gamma_j)$ to $L^2(\Gamma_k)$. Furthermore, the operators $\chi(\iota_{\delta}(P))\Kcal_{S^a, S^f}$ and $\chi(\iota_{\delta}(Q))\Kcal_{S^b, S^f}$ are bounded operators from $L^2(S^a)$ to $L^2(S^f)$, and from $L^2(S^b)$ to $L^2(S^f)$, respectively.
\end{lem}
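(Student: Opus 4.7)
The plan is to reduce each boundedness claim to Schur's test applied to an explicit kernel, with an isotropic dilation used to kill the $\delta$-dependence in the second pair. For $\Acal_{\Gamma_1,\Gamma_2}$ (and $\Acal_{\Gamma_2,\Gamma_1}$ by symmetry), parameterize $\Bx=(x,\delta)\in\Gamma_2$ and $\By=(y,-\delta)\in\Gamma_1$ with $x,y\in(-L/2,L/2)$. Then $|\Bx-\By|^2=(x-y)^2+4\delta^2$, so the integral kernel reduces to the Poisson-type expression $K_\delta(x,y)=\delta/\big(\pi((x-y)^2+4\delta^2)\big)$. The substitution $t=(x-y)/(2\delta)$ gives
\[
\sup_{x}\int_{-L/2}^{L/2}K_\delta(x,y)\,dy\le\int_{\RR}\frac{dt}{2\pi(t^2+1)}=\frac{1}{2},
\]
uniformly in $\delta$, and by symmetry of $K_\delta$ the same bound holds with $x$ and $y$ interchanged. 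Schur's test then delivers $\|\Acal_{\Gamma_j,\Gamma_k}\|_{L^2\to L^2}\le 1/2$, independently of $\delta$.

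For $\chi(\iota_\delta(P))\Kcal_{S^a,S^f}$ (the $Q$-case is identical after relabeling), I introduce the dilation $\Bx=P+\delta\hat\Bx$, $\By=P+\delta\tilde\By$, so that $\tilde\By$ ranges over $S_1^a$ and $\hat\Bx$ ranges over a fixed, $\delta$-independent bounded subset $\hat\iota(P)$ of the rescaled facade. The unit normal is preserved by the dilation, the kernel is homogeneous of degree $-1$, and $ds_\By=\delta\,ds_{\tilde\By}$, so every power of $\delta$ cancels and
\[
\Kcal_{S^a,S^f}[\phi](P+\delta\hat\Bx)=\int_{S_1^a}\hat K(\hat\Bx,\tilde\By)\,\tilde\phi(\tilde\By)\,ds_{\tilde\By},\qquad \hat K(\hat\Bx,\tilde\By):=\frac{1}{2\pi}\frac{\langle\hat\Bx-\tilde\By,\nu_{\hat\Bx}\rangle}{|\hat\Bx-\tilde\By|^2}.
\]
This is a fixed integral operator on the bounded set $\hat\iota(P)\times S_1^a$. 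Once $\hat K$ is shown to be pointwise bounded there, Schur's test yields $L^2$-boundedness of the rescaled operator, and the scaling identities $\|\phi\|_{L^2(S^a)}^2=\delta\|\tilde\phi\|_{L^2(S_1^a)}^2$ together with its analogue on the target side cancel the remaining powers of $\delta$, giving a $\delta$-uniform bound for the original operator.

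The main obstacle is the pointwise bound on $\hat K$ at the junctions $(\hat\Bx,\tilde\By)=((0,\pm 1),(0,\pm 1))$, where both numerator and denominator vanish. Near the top junction, parameterize $\tilde\By=(\cos(\pi/2+s),\sin(\pi/2+s))$ with $s\in(0,\pi)$ and $\hat\Bx=(\xi,1)$ with $\xi\ge 0$; then $\langle\hat\Bx-\tilde\By,\nu_{\hat\Bx}\rangle=1-\cos s\sim s^2/2$ while $|\hat\Bx-\tilde\By|^2=(\xi+\sin s)^2+(1-\cos s)^2\gtrsim s^2$, so the quotient stays bounded (indeed at most $1/2+o(1)$ as $s\to 0$). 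The bottom junction is analogous. With $\hat K\in L^\infty(\hat\iota(P)\times S_1^a)$ in hand, Schur's test closes the argument.
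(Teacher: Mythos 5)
Your proof is correct, and it is worth separating the two halves. For the $\Acal_{\Gamma_j,\Gamma_k}$ part your argument is essentially the paper's argument in different packaging: the paper bounds the bilinear form $\la\Acal_{\Gamma_2,\Gamma_1}[\phi_2],\phi_1\ra$ by splitting $|\phi_1\phi_2|\le\tfrac12(\phi_1^2+\phi_2^2)$ and then evaluating $\int\frac{\delta}{(x_1-y_1)^2+4\delta^2}\,dx_1$ as a difference of arctangents bounded uniformly in $\delta$; that is precisely the symmetric Schur test you invoke, and your explicit constant $1/2$ matches their computation. For the second part, however, the paper offers no proof at all --- it states ``we only prove that $\Acal_{\Gamma_2,\Gamma_1}$ is bounded'' and leaves the operators $\chi(\iota_\delta(P))\Kcal_{S^a,S^f}$ and $\chi(\iota_\delta(Q))\Kcal_{S^b,S^f}$ to the reader. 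Your rescaling argument genuinely fills this in: the dilation by $\delta$ centred at $P$ maps $S^a$ to $S_1^a$ and $\iota_\delta(P)$ to a fixed bounded set, the degree $-1$ homogeneity of the NP kernel cancels against the Jacobian $ds_\By=\delta\,ds_{\tilde\By}$, and the two factors of $\delta^{1/2}$ from the $L^2$ norms on source and target cancel each other, so everything reduces to the $\delta$-independent boundedness of $\hat K$. Your junction analysis is the right place to focus, since that is where numerator and denominator both vanish; the bound $\frac{1-\cos s}{(\xi+\sin s)^2+(1-\cos s)^2}\le\frac{1}{1+\cos s}$ is exactly the manifestation of the $C^{1,1}$ regularity of $\partial D$ at the cap--facade junction, which keeps the NP kernel in $L^\infty$. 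The one cosmetic point is that $\iota_\delta(P)$ is defined with an $\Ocal(\delta)$ condition, so the rescaled set $\hat\iota(P)$ is only bounded up to the implicit constant in that $\Ocal$; your estimate is uniform over any such fixed constant, so nothing is lost, but it is worth saying so explicitly.
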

\begin{proof} We only prove that $\mathcal{A}_{\Gamma_2, \Gamma_1}$ is a bounded operator $L^2(\Gamma_2)$ to $L^2(\Gamma_1)$. First, for $\phi_1\in L^2(\Gamma_1)$ and $\phi_2\in L^2(\Gamma_2)$, one has
\[
\begin{split}
&|\la\mathcal{A}_{\Gamma_2, \Gamma_1}[\phi_2],\phi_1\ra_{L^2(\Gamma_1)}|\\
=&\frac{1}{2\pi}\Big|\int_{\Gamma_1}\int_{\Gamma_2}\frac{\delta}{|\Bx-\By|^2}\phi_2(\By)ds_\By\phi_1(\Bx)ds_\Bx\Big| \\
\leq& \frac{1}{4\pi}\int_{\Gamma_1}\int_{\Gamma_2}\frac{\delta}{|\Bx-\By|^2}\phi_2^2(\By)ds_\By ds_\Bx+\frac{1}{4\pi}\int_{\Gamma_1}\int_{\Gamma_2}\frac{\delta}{|\Bx-\By|^2}\phi_1^2(\Bx)ds_\By ds_\Bx \\
=& \frac{1}{4\pi}\int_{-L/2}^{L/2}\int_{-L/2}^{L/2}\frac{\delta}{|x_1-y_1|^2+4\delta^2}dx_1 \phi_2^2(\By) dy_1 + \frac{1}{4\pi}\int_{-L/2}^{L/2}\int_{-L/2}^{L/2}\frac{\delta}{|x_1-y_1|^2+4\delta^2}dy_1 \phi_1^2(\Bx) dx_1\\
=& \frac{1}{8\pi}\int_{-L/2}^{L/2}\Big(\arctan\frac{L/2-y_1}{2\delta}-\arctan\frac{-L/2-y_1}{2\delta}\Big) \phi_2^2(\By) dy_1\\
&+\frac{1}{8\pi}\int_{-L/2}^{L/2}\Big(\arctan\frac{L/2-x_1}{2\delta}-\arctan\frac{-L/2-x_1}{2\delta}\Big) \phi_1^2(\Bx) dx_1\\
\leq& C(\|\phi_1\|_{L^2(\Gamma_1)}^2+\|\phi_2\|_{L^2(\Gamma_2)}^2),
\end{split}
\]
where the constant $C$ is independent of $\delta$. By following a similar arguments as in \cite{HK07:book} (pp. 18), one can show that $\mathcal{A}_{\Gamma_2, \Gamma_1}$ is a bounded operator $L^2(\Gamma_2)$ to $L^2(\Gamma_1)$.
\end{proof}

\begin{lem}\label{le:app01}
Suppose $\Bx\in S^c$, then for any function $\phi\in L^2(S^f)$, which satisfies
\beq
\phi(\By)=-\phi(\By+2\delta\mathbf{n}), \quad \By\in \Gamma_1,
\eeq
there holds
\beq\label{eq:leapp01}
\Kcal_{S^f\setminus(\iota_{\delta}(P)\cup\iota_{\delta}(Q)), S^c}[\phi](\Bx)=o(1).
\eeq
\end{lem}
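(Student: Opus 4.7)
The plan is to exploit the odd symmetry of $\phi$ across the axis $\Gamma_0$ to pair up contributions from $\Gamma_1$ and $\Gamma_2$: this converts the surface integral into a single integral over $\Gamma_1^-:=\Gamma_1\setminus(\iota_\delta(P)\cup\iota_\delta(Q))$ whose kernel is a \emph{difference} of the original kernel evaluated at a point and at its reflection across $\Gamma_0$. The smoothness of that kernel in the transverse direction will produce an extra factor of $\delta$ that, combined with the exclusion of the endpoint neighbourhoods, delivers the $o(1)$ bound.

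First I would parametrise $\By=(y_1,-\delta)\in\Gamma_1$ and its reflection $\By'=\By+2\delta\Bn=(y_1,\delta)\in\Gamma_2$, noting that the hypothesis reads $\phi(\By')=-\phi(\By)$. Using the definition \eqnref{eq:defbnd01}, splitting the integration region as $\Gamma_1^-\cup\Gamma_2^-$, and relabelling the $\Gamma_2^-$ integral via $\By\mapsto\By'$ yields
\begin{equation*}
\Kcal_{S^f\setminus(\iota_\delta(P)\cup\iota_\delta(Q)),S^c}[\phi](\Bx)=\frac{1}{2\pi}\int_{\Gamma_1^-}\phi(\By)\bigl[K(\Bx,\By)-K(\Bx,\By')\bigr]\,dy_1,
\end{equation*}
where $K(\Bx,\By):=\la\Bx-\By,\nu_\Bx\ra/|\Bx-\By|^2$.

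Next I would Taylor expand this kernel difference in the transverse variable around the midpoint $\By_0=(y_1,0)\in\Gamma_0$, which gives $K(\Bx,\By)-K(\Bx,\By')=-2\delta\,\p_{y_2}K(\Bx,\By_0)+\Ocal(\delta^3)$. A direct calculation for $\Bx=P+\delta(\cos\theta,\sin\theta)\in S^a$, setting $r_0:=|\Bz_\By-P|$, produces $\p_{y_2}K(\Bx,\By_0)=-\sin\theta/r_0^2+\Ocal(\delta/r_0^3)$, so
\begin{equation*}
\bigl|K(\Bx,\By)-K(\Bx,\By')\bigr|\lesssim\frac{\delta}{r_0^2}\qquad\text{uniformly for }\By\in\Gamma_1^-,
\end{equation*}
and symmetrically when $\Bx\in S^b$ with $r_0=|\Bz_\By-Q|$. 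Since $\Gamma_1^-$ excludes $\iota_\delta(P)\cup\iota_\delta(Q)$, one has $r_0\geq C\delta$ throughout $\Gamma_1^-$, and applying Cauchy--Schwarz together with the elementary bound $\int_{C\delta}^{L}\delta\,r_0^{-2}\,dr_0\leq C^{-1}$ yields the desired estimate; combined with the higher-order Taylor remainder, which contributes only $\Ocal(\delta^2)\|\phi\|_{L^2}$, this gives \eqnref{eq:leapp01}.

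The main obstacle is the second step: without the antisymmetry pairing one would only be able to bound the individual integrals over $\Gamma_1^-$ and $\Gamma_2^-$ by $\Ocal(1)$, since the kernel $K(\Bx,\cdot)$ itself is not small. It is precisely the first-order cancellation enforced by $\phi(\By')=-\phi(\By)$ together with the smoothness of $K$ in $y_2$ that produces the extra $\delta$ needed for the $o(1)$ conclusion, and one has to choose the constant implicit in $\iota_\delta(P),\iota_\delta(Q)$ sufficiently large so that the residual inverse-square weight on $\Gamma_1^-$ is integrable against $\phi$ and the resulting bound is $o(1)$ as $\delta\to 0$.
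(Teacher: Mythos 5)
Your argument is essentially the paper's: the paper replaces $\By$ by its projection $\Bz_y$ onto $\Gamma_0$ (an $o(1)$ error coming from exactly the transverse smoothness of the kernel you invoke) and then observes that the projected kernel is identical for a point of $\Gamma_1$ and its reflection on $\Gamma_2$, so the antisymmetry hypothesis on $\phi$ cancels the leading term exactly — you merely perform the pairing before the Taylor expansion rather than after. One small caveat: your appeal to Cauchy--Schwarz does not match the bound $\int_{C\delta}^{L}\delta r_0^{-2}\,dr_0\leq C^{-1}$ (that is an $L^1$ bound on the residual kernel, whose $L^2$ norm over $r_0\geq C\delta$ is of order $\delta^{-1/2}$), but the paper's own proof is no more precise on this point and the estimate suffices for the bounded densities to which the lemma is actually applied.
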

\begin{proof}
Note that $S^f=\Gamma_1\cup \Gamma_2$. Straightforward computations show that
\[
\begin{split}
&\Kcal_{S^f\setminus(\iota_{\delta}(P)\cup \iota_{\delta}(Q)), S^c}[\phi](\Bx)=\frac{1}{2\pi}\int_{S^f\setminus(\iota_{\delta}(P)\cup \iota_{\delta}(Q))} \frac{\la \Bx-\By, \nu_x\ra}{|\Bx-\By|^2}\phi(\By)ds_\By\\
=&\frac{1}{2\pi}\int_{S^f\setminus(\iota_{\delta}(P)\cup \iota_{\delta}(Q))} \frac{\la \Bx-\Bz_y-\delta\nu_y, \nu_x\ra}{|\Bx-\Bz_y-\delta\nu_y|^2}\phi(\By) ds_\By\\
=&\frac{1}{2\pi}\int_{S^f\setminus(\iota_{\delta}(P)\cup \iota_{\delta}(Q))} \frac{\la \Bx-\Bz_y,\nu_x\ra}{|\Bx-\Bz_y|^2}\phi(\By) ds_\By+o(1)\\
=&\frac{1}{2\pi}\int_{\Gamma_1\setminus(\iota_{\delta}(P)\cup \iota_{\delta}(Q))} \frac{\la \Bx-\Bz_y,\nu_x\ra}{|\Bx-\Bz_y|^2}\phi(\By) ds_\By\\
&+\frac{1}{2\pi}\int_{\Gamma_1\setminus(\iota_{\delta}(P)\cup \iota_{\delta}(Q))} \frac{\la \Bx-\Bz_y,\nu_x\ra}{|\Bx-\Bz_y|^2}\phi(\By+2\delta\mathbf{n}) ds_\By+o(1)=o(1),
\end{split}
\]
which completes the proof.
\end{proof}

\section{Quantitative analysis of the electric field}

In this section, we present the quantitative analysis of the solution to the conductivity equation \eqref{eq:helm01} as well as its geometric relationship to the inclusion $D$.

\subsection{Several auxiliary lemmas}

Recall that $u$ is represented by \eqnref{eq:intsol01}. We first derive some asymptotic properties of the density function $\varphi$ in \eqnref{eq:intsol02}. Let $\Bz_x\in \Gamma_0$ be defined by
\beq
\Bz_x=\left\{
\begin{array}{ll}
\Bx+\delta\mathbf{n}, & \Bx\in \Gamma_1, \\
\Bx-\delta\mathbf{n}, & \Bx\in \Gamma_2.
\end{array}
\right.
\eeq
One has the following asymptotic expansion for $H$ around $\Gamma_0$:
\beq\label{eq:asyH01}
H(\Bx)=H(\Bz_x) + \nabla H(\Bz_x)\cdot (\Bx-\Bz_x)+ \Ocal(|\Bx-\Bz_x|^2)= H(\Bz_x) + \delta \nabla H(\Bz_x)\cdot (\tilde\Bx-\Bz_x)+\Ocal(\delta^2),
\eeq
for $\Bx\in S^f$ and $\tilde\Bx\in S_1^f$. Similarly, one has
\beq\label{eq:asyH02}
H(\Bx)=H(P) + \nabla H(P)\cdot (\Bx-P)+ \Ocal(|\Bx-P|^2)= H(P) + \delta \nabla H(P)\cdot \nu_x+\Ocal(\delta^2),
\eeq
for $\Bx\in S^a$ and $\tilde\Bx\in S_1^a$. Moreover,
\beq\label{eq:asyH03}
H(\Bx)=H(Q) + \nabla H(P)\cdot (\Bx-Q)+ \Ocal(|\Bx-Q|^2)= H(Q) + \delta \nabla H(Q)\cdot \nu_x+\Ocal(\delta^2),
\eeq
for $\Bx\in S^b$ and $\tilde\Bx\in S_1^b$.

We now can show the following asymptotic result.

\begin{lem}\label{le:main01}
Suppose $\varphi$ is defined in \eqnref{eq:intsol02}, then one has
\beq\label{eq:asymvphi01}
\varphi(\Bx)=\left\{
\begin{array}{l}
(\lambda I + A_\delta)^{-1}[(-1)^j\partial_{x_2} H(\cdot,0)](x_1) +\delta(\lambda I -A_\delta)^{-1}[\partial_{x_2}^2 H(\cdot, 0)](x_1)\medskip\\
\quad +\chi(\iota_{\delta^\epsilon}(P)\cup \iota_{\delta^\epsilon}(Q))\Ocal(\delta^{2(1-\epsilon)})+\Ocal(\delta^2), \quad \Bx\in \Gamma_j\setminus(\iota_{\delta}(P)\cup \iota_{\delta}(Q)),\medskip \\
(\lambda I -\Kcal_1^*)^{-1}[\nabla H(P)\cdot\nu ] +o(1), \quad\quad\quad\quad\quad \Bx\in S^a\cup \iota_{\delta}(P), \medskip\\
(\lambda I -\Kcal_2^*)^{-1}[\nabla H(Q)\cdot\nu ] +o(1), \quad\quad\quad\quad\quad \Bx\in S^b\cup \iota_{\delta}(Q),
\end{array}
\right.
\eeq
where $0<\epsilon<1$ and the operator $A_\delta$ is defined by
\beq\label{eq:defAop01}
A_\delta[\psi](x_1):=\frac{1}{2\pi}\int_{-L/2}^{L/2}\frac{\delta}{(x_1-y_1)^2+4\delta^2}\psi(y_1)dy_1, \quad \psi\in L^2(-L/2, L/2).
\eeq
The operators $\Kcal_1^*$ and $\Kcal_2^*$ are defined by
\beq\label{eq:remtm01}
\begin{split}
\Kcal_1^*[\varphi_1](\Bx):=&\int_{S^a\cup \iota_{\delta}(P)}\frac{\la\Bx-\By,\nu_x\ra}{|\Bx-\By|^2}\varphi_1(\By)ds_\By +\chi(\iota_{\delta}(P))\mathcal{A}_{S^f\cap\iota_{\delta}(P)}[\varphi_1](\Bx) \\
\Kcal_2^*[\varphi_2](\Bx):=&\int_{S^b\cup \iota_{\delta}(Q)}\frac{\la\Bx-\By,\nu_x\ra}{|\Bx-\By|^2}\varphi_1(\By)ds_\By+\chi(\iota_{\delta}(Q))\mathcal{A}_{S^f\cap\iota_{\delta}(Q)}[\varphi_2](\Bx),
\end{split}
\eeq
respectively.
\end{lem}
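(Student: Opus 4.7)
The plan is to start from the integral identity $\lambda \varphi - \Kcal_D^*[\varphi]=\partial_\nu H|_{\partial D}$, substitute the asymptotic decomposition $\Kcal_D^* = \Kcal_0+\delta\Kcal_1+\Ocal(\delta^2)$ from the preceding lemma, and solve the resulting system piece by piece on $\partial D = S^a\cup S^f\cup S^b$. The key observation is that each piece of the rod selects, via the indicator functions in \eqref{eq:leasyNP02}--\eqref{eq:leasyNP03}, only a few of the operators appearing in $\Kcal_0+\delta\Kcal_1$, so the coupled equation for $\varphi$ essentially decouples into a facade problem (on $\Gamma_1\cup\Gamma_2$) and two cap problems (on $S^a\cup\iota_\delta(P)$ and $S^b\cup\iota_\delta(Q)$), up to small cross-interactions that are controlled by Lemma \ref{le:app01}.

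For the facade analysis I would fix $\Bx\in\Gamma_j\setminus(\iota_\delta(P)\cup\iota_\delta(Q))$ and observe that the only surviving term of $\Kcal_0$ is $\Acal_{\Gamma_{3-j},\Gamma_j}[\varphi]$, which after parametrizing $\Gamma_1,\Gamma_2$ by $x_1\in(-L/2,L/2)$ reduces, up to normalization, to $A_\delta$ acting on the opposing trace of $\varphi$. Writing $\varphi_j(x_1):=\varphi|_{\Gamma_j}$ and introducing the symmetric/antisymmetric combinations $\varphi_\pm=\tfrac{1}{2}(\varphi_2\pm\varphi_1)$, the $2\times 2$ system diagonalizes into
\[
(\lambda I - A_\delta)\varphi_+ = \tfrac{1}{2}\bigl(\partial_\nu H|_{\Gamma_2}+\partial_\nu H|_{\Gamma_1}\bigr),\qquad (\lambda I + A_\delta)\varphi_- = \tfrac{1}{2}\bigl(\partial_\nu H|_{\Gamma_2}-\partial_\nu H|_{\Gamma_1}\bigr).
\]
Plugging in \eqref{eq:asyH01} yields $\partial_\nu H|_{\Gamma_2}-\partial_\nu H|_{\Gamma_1}=2\partial_{x_2}H(\cdot,0)+\Ocal(\delta^2)$ (antisymmetric part is order $1$) while the symmetric part is $2\delta\,\partial_{x_2}^2 H(\cdot,0)+\Ocal(\delta^2)$. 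Solving the two scalar equations and recombining $\varphi_j=(-1)^j\varphi_-+\varphi_+$ produces the first line of \eqref{eq:asymvphi01}, with the contribution of $\delta\Kcal_1$ and of the long-range operators from $S^c$ absorbed into the $\Ocal(\delta^2)$ remainder away from the caps and into the $\chi(\iota_{\delta^\epsilon})\Ocal(\delta^{2(1-\epsilon)})$ term in the transition zone.

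For the cap equation I would fix $\Bx\in S^a\cup\iota_\delta(P)$, rescale to the unit half-disk via $\tilde\Bx=(\Bx-P)/\delta$ and set $\tilde\varphi(\tilde\Bx)=\varphi(\Bx)$. The terms of $\Kcal_0$ that contribute are the NP operator on $S^a$, the transition term $\chi(\iota_\delta(P))\Kcal_{S^a,S^f}$ and the self-pairing on $\iota_\delta(P)$; under the rescaling these merge into precisely the operator $\Kcal_1^*$ in \eqref{eq:remtm01}. The cross-terms involving $S^b$, the far part of $\Gamma_2$ (minus its counterpart on $\Gamma_1$), and the whole $\delta\Kcal_1$ contribution are $o(1)$, the main cancellation being supplied by Lemma \ref{le:app01} applied to the even/odd decomposition of $\varphi$ on the facade. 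Expanding the right-hand side by \eqref{eq:asyH02} gives $\partial_\nu H = \nabla H(P)\cdot\nu_x+\Ocal(\delta)$, and inverting $(\lambda I-\Kcal_1^*)$ yields the second line of \eqref{eq:asymvphi01}; the third line follows by the symmetric argument at $Q$.

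The main obstacle is twofold. First, one must justify the uniform (in $\delta$) invertibility of the leading operators $\lambda I\pm A_\delta$ and $\lambda I-\Kcal_j^*$; this requires showing that $A_\delta$ converges (in a suitable sense) to a bounded multiple of the identity, so that $\lambda\pm A_\delta$ stays away from $0$ for $\lambda=\frac{\sigma_0+1}{2(\sigma_0-1)}\notin[-\tfrac12,\tfrac12]$, and that $\Kcal_j^*$ is a compact perturbation of a standard NP operator whose spectrum does not meet $\lambda$. Second, one must carefully piece together the facade and cap solutions across the transition regions $\iota_\delta(P)$ and $\iota_\delta(Q)$, controlling the error of the facade formula in $\iota_{\delta^\epsilon}\setminus\iota_\delta$ at the rate $\delta^{2(1-\epsilon)}$; this is the reason for the intermediate scale $\delta^\epsilon$ with $0<\epsilon<1$ appearing in the statement, and is ultimately handled by combining the cancellation in Lemma \ref{le:app01} with the pointwise bounds on the kernels of $\Kcal_{S^c,S^f}$ coming from \eqref{eq:leasy07}.
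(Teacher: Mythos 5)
Your proposal is correct and follows essentially the same route as the paper: insert the asymptotic decomposition of $\Kcal_D^*$ into $(\lambda I-\Kcal_D^*)[\varphi]=\partial_\nu H$, decouple the facade system on $\Gamma_1\cup\Gamma_2$ (the paper first extracts the leading-order antisymmetry $\varphi|_{\Gamma_1}=-\varphi|_{\Gamma_2}+o(1)$ and then revisits the facade equation, which is just your symmetric/antisymmetric diagonalization written in two passes), and handle the caps via $(\lambda I-\Kcal_j^*)[\varphi]=\nabla H(P\ \text{or}\ Q)\cdot\nu+o(1)$ with the facade-to-cap interaction killed by the antisymmetry lemma. The only differences are cosmetic (explicit diagonalization, rescaling the caps to the unit half-disk) and your added remarks on the uniform invertibility of $\lambda I\pm A_\delta$, which the paper leaves implicit.
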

\begin{proof}
Since
$$
\Big(\lambda I- \Kcal_D^*\Big)[\varphi]=\Big[\frac{\p H}{\p \nu}\Big|_{\p D}\Big].
$$
By combining \eqnref{eq:leasyNP01} and \eqnref{eq:asyH01} one can readily verify that
$$
\Big(\lambda I- \Kcal_0\Big)[\varphi](\Bx)= \nabla H(\Bz_x)\cdot\nu_x+ o(1), \quad \Bx\in S^f.
$$
By using \eqnref{eq:leasyNP02}, one thus has
\beq\label{eq:bdint1201}
\begin{split}
\lambda\varphi(\Bx)-\frac{1}{\pi}\int_{\Gamma_2}\frac{\delta}{|\Bx-\By|^2}\varphi(\By)ds_\By=-\nabla H(\Bz_x)\cdot\mathbf{n} +o(1), \quad &\Bx\in \Gamma_1\setminus \big(\iota_{\delta}(P)\cup \iota_{\delta}(Q)\big), \\
\lambda\varphi(\Bx)-\frac{1}{\pi}\int_{\Gamma_1}\frac{\delta}{|\Bx-\By|^2}\varphi(\By)ds_\By=\nabla H(\Bz_x)\cdot\mathbf{n} +o(1), \quad &\Bx\in \Gamma_2\setminus \big(\iota_{\delta}(P)\cup \iota_{\delta}(Q)\big).
\end{split}
\eeq
By direct computations, one can show
\beq\label{eq:bdint1202}
\begin{split}
&\lambda\varphi(x_1,-\delta)-\frac{1}{\pi}\int_{-L/2}^{L/2}\frac{\delta}{(x_1-y_1)^2+4\delta^2}\varphi(y_1,\delta)dy_1\\
&\hspace*{4cm}=-\partial_{x_2} H(x_1,0) +o(1), \quad |x_1|\leq L/2-\Ocal(\delta), \\
& \lambda\varphi(x_1,\delta)-\frac{1}{\pi}\int_{-L/2}^{L/2}\frac{\delta}{(x_1-y_1)^2+4\delta^2}\varphi(y_1,-\delta)dy_1\\
&\hspace*{4cm}=\partial_{x_2} H(x_1,0) +o(1), \quad |x_1|\leq L/2-\Ocal(\delta).
\end{split}
\eeq
Thus one can derive that $\varphi(x_1,-\delta)=-\varphi(x_1,\delta)+o(1)$, for $|x_1|\leq L/2-\Ocal(\delta)$.
Furthermore, for $\Bx\in S_1^a$, by making use of \eqnref{eq:leasyNP02}, \eqnref{eq:asyH02} and Lemma \ref{le:app01}, one has
\beq\label{eq:asymvphi02}
(\lambda I -\Kcal_1^*)[\varphi](\Bx)= \nabla H(P)\cdot\nu_x+ o(1), \quad \mbox{in} \quad S^a\cup\iota_\delta(P).
\eeq
In a similar manner, one can show that
\beq\label{eq:bndintvp02}
(\lambda I -\Kcal_2^*)[\varphi](\Bx)= \nabla H(Q)\cdot\nu_x+ o(1), \quad \mbox{in} \quad S^b\cup\iota_\delta(Q).
\eeq
and so the last equation in \eqnref{eq:asymvphi01} follows.

Next, by combining \eqnref{eq:leasyNP01}, \eqnref{eq:leasyNP02}, and \eqnref{eq:leasyNP03} again for $\Bx\in \Gamma_j\setminus(\iota_{\delta}(P)\cup \iota_{\delta}(Q)) $, $j=1, 2$, and using
the second and third equations in \eqnref{eq:asymvphi01}, one has
\beq\label{eq:bndit0101}
\begin{split}
&\lambda\varphi(x_1,(-1)^j\delta)-\frac{1}{2\pi}\int_{-L/2}^{L/2}\frac{\delta}{(x_1-y_1)^2+4\delta^2}\varphi(y_1,(-1)^{j+1}\delta)dy_1\\
=&(-1)^j\partial_{x_2} H(x_1,0) + \delta \partial_{x_2}^2 H(x_1, 0) +\chi(\iota_{\delta^\epsilon}(P)\cup \iota_{\delta^\epsilon}(Q))\Ocal(\delta^{2(1-\epsilon)})+ \Ocal(\delta^2), \quad 0<\epsilon<1,
\end{split}
\eeq
which verifies the first equation in \eqnref{eq:asymvphi01} and completes the proof.
\end{proof}
Before presenting our main result, we need to further analyze the operator $A_\delta$ defined in \eqnref{eq:defAop01}
\begin{lem}\label{le:main02}
Suppose $A_\delta$ is defined in \eqnref{eq:defAop01}, then it holds that
\beq\label{eq:eigenA01}
A_\delta[y_1^n](x_1)=
\frac{1}{2}x_1^n + o(1),  \quad \Bx\in \Gamma_j\setminus\big(\iota_{\delta}(P)\cup \iota_{\delta}(Q)\big), \quad n \geq 0.\\
\eeq
\end{lem}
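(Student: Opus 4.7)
The plan is to compute $A_\delta[y_1^n](x_1)$ by changing variables to expose the Poisson-like structure of the kernel, and then extracting the leading contribution via a binomial expansion of $y_1^n$ around the base point $x_1$.

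First, I would apply the rescaling $t = (y_1-x_1)/(2\delta)$, so that $dy_1 = 2\delta\,dt$ and the denominator becomes $4\delta^2(t^2+1)$. This recasts the operator as
\[
A_\delta[y_1^n](x_1) \;=\; \frac{1}{4\pi}\int_{\alpha_\delta}^{\beta_\delta}\frac{(x_1+2\delta t)^n}{t^2+1}\,dt,
\]
where $\alpha_\delta := -(L/2+x_1)/(2\delta)$ and $\beta_\delta := (L/2-x_1)/(2\delta)$. Because $\Bx\in\Gamma_j\setminus(\iota_\delta(P)\cup\iota_\delta(Q))$, both $L/2-x_1$ and $L/2+x_1$ are bounded below by a quantity going to zero strictly slower than $\delta$, so that $\beta_\delta\to+\infty$ and $\alpha_\delta\to-\infty$ as $\delta\to 0$.

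Next, I would expand
\[
(x_1+2\delta t)^n \;=\; \sum_{k=0}^n\binom{n}{k}x_1^{n-k}(2\delta)^k\,t^k
\]
by the binomial theorem and treat each of the $n+1$ resulting pieces separately. The $k=0$ piece evaluates to $\frac{x_1^n}{4\pi}\bigl(\arctan\beta_\delta-\arctan\alpha_\delta\bigr)$, which tends to the stated multiple of $x_1^n$ as $\delta\to 0$ — this supplies the leading term. For $k\geq 1$, polynomial long division of $t^k$ by $t^2+1$ shows that its antiderivative grows at most like $|t|^{k-1}$ for $k\geq 2$ and only logarithmically when $k=1$. Consequently $\int_{\alpha_\delta}^{\beta_\delta}t^k/(t^2+1)\,dt = \Ocal(\delta^{-(k-1)})$, and the prefactor $(2\delta)^k$ turns each such piece into $\Ocal(\delta) = o(1)$.

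The main technical subtlety I anticipate is ensuring that these $o(1)$ remainders hold \emph{uniformly} in $\Bx\in\Gamma_j\setminus(\iota_\delta(P)\cup\iota_\delta(Q))$, since the constants in the polynomial-division remainders involve $1/|L/2\pm x_1|$ and could deteriorate as $\Bx$ approaches an endpoint. This is precisely where the excluded neighborhoods $\iota_\delta(P),\iota_\delta(Q)$ of size $\Ocal(\delta)$ earn their keep: their removal forces a uniform lower bound on $|L/2\pm x_1|$ relative to $\delta$, yielding uniform divergence of $\alpha_\delta,\beta_\delta$ to $\mp\infty$ and hence a uniform $o(1)$ remainder, which completes the proof.
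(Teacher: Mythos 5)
Your argument is correct and follows essentially the same route as the paper's: the identical rescaling $t=(y_1-x_1)/(2\delta)$, the arctangent evaluation producing the leading term, and the observation that each extra power of $t$ in the numerator is paid for by a factor of $\delta$, with the excluded neighbourhoods of $P$ and $Q$ guaranteeing that the rescaled endpoints diverge. The only structural difference is that the paper organizes the higher-order contributions by induction on $n$, peeling off one factor $y_1=x_1+2\delta t$ per step, whereas you unroll this into a single binomial expansion; the computational content is the same, and your explicit attention to uniformity over $\Gamma_j\setminus(\iota_{\delta}(P)\cup\iota_{\delta}(Q))$ is a point the paper leaves implicit. One caveat: with the prefactor $\frac{1}{2\pi}$ in the definition \eqnref{eq:defAop01}, your $k=0$ term evaluates to $\frac{1}{4}x_1^n$ rather than the stated $\frac{1}{2}x_1^n$; the value $\frac{1}{2}$ requires the prefactor $\frac{1}{\pi}$ that the paper actually uses in its own proof and in \eqnref{eq:bdint1201}--\eqnref{eq:bdint1202}, so this factor-of-two mismatch is an inconsistency in the paper's normalization rather than a flaw in your reasoning --- but you should state the constant you obtain explicitly instead of writing ``the stated multiple''.
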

\begin{proof}
We use deduction to prove the assertion.
Since $\Bx\in \Gamma_j\setminus\big(\iota_{\delta}(P)\cup \iota_{\delta}(Q)\big)$, one has
$$|L/2-x_1|=\Ocal(\delta^{\epsilon}), \quad \mbox{and} \quad |L/2+x_1|=\Ocal(\delta^{\epsilon}), \quad 0\leq\epsilon<1.$$
Then for $n=0$, it is straightforward to verify that
\beq
\begin{split}
A_\delta[1](x_1)=&\frac{1}{\pi}\int_{-L/2}^{L/2} \frac{\delta}{(x_1-y_1)^2+4\delta^2} dy_1\\
=&\frac{1}{2\pi} \Big(\arctan\frac{L/2-x_1}{2\delta}-\arctan\frac{-L/2-x_1}{2\delta}\Big)=\frac{1}{2}+o(1).
\end{split}
\eeq
Next, we suppose that \eqnref{eq:eigenA01} holds for $n\leq N$. Then by using change of variables, one can derive that
\beq
\begin{split}
A_\delta[y_1^{N+1}](x_1)=&\frac{1}{\pi}\int_{-L/2}^{L/2} \frac{\delta}{(x_1-y_1)^2+4\delta^2} y_1^N y_1dy_1 \\
=&\frac{1}{2\pi}\int_{\frac{-L/2-x_1}{2\delta}}^{\frac{L/2-x_1}{2\delta}} \frac{1}{1+t^2} y_1^N (2\delta t +x_1)dt\\
=&\frac{1}{\pi}\delta\int_{\frac{-L/2-x_1}{2\delta}}^{\frac{L/2-x_1}{2\delta}} \frac{1}{1+t^2} y_1^N t dt + x_1 \frac{1}{2}x_1^N + o(1)\\
=&\frac{1}{\pi}\delta \Ocal\left(\ln(1+\delta^{2(\epsilon-1)})\right)+\frac{1}{2}x_1^{N+1} + o(1)= \frac{1}{2}x_1^{N+1} + o(1),
\end{split}
\eeq
which completes the proof.
\end{proof}
The following lemma is also of critical importance
\begin{lem}\label{le:app01}
There holds the following that
\beq\label{eq:leapp01}
\begin{split}
\int_{S^a\cup \iota_{\delta}(P)}(\lambda I -\Kcal_1^*)^{-1}[\nabla H(P)\cdot\nu ]=&-2\delta\Big(\lambda-\frac{1}{2}\Big)^{-1}\p_{x_1} H(P)+o(\delta), \\
\int_{S^b\cup \iota_{\delta}(Q)}(\lambda I -\Kcal_2^*)^{-1}[\nabla H(Q)\cdot\nu ]=&2\delta\Big(\lambda-\frac{1}{2}\Big)^{-1}\p_{x_1} H(Q)+o(\delta).
\end{split}
\eeq
\end{lem}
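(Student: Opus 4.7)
I would integrate the defining identity $(\lambda I - \Kcal_1^*)[\psi_1] = \nabla H(P)\cdot\nu$ over $S^a\cup\iota_\delta(P)$, where $\psi_1 := (\lambda I - \Kcal_1^*)^{-1}[\nabla H(P)\cdot\nu]$, reducing the problem to two subproblems: (i) evaluating the forcing integral $\int_{S^a\cup\iota_\delta(P)}\nabla H(P)\cdot\nu\,ds$, and (ii) establishing a Gauss--Green type relation $\int_{S^a\cup\iota_\delta(P)}\Kcal_1^*[\psi_1]\,ds = \tfrac12\int_{S^a\cup\iota_\delta(P)}\psi_1\,ds + o(\delta)$. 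Together these give $(\lambda - \tfrac12)\int\psi_1\,ds = -2\delta\,\p_{x_1}H(P) + o(\delta)$, whence the first identity; the second follows by the symmetric argument at $Q$, where parametrizing $S^b$ with $\theta\in[-\pi/2,\pi/2]$ produces the sign change.

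For (i), the contributions from $\iota_\delta(P)\cap\Gamma_1$ and $\iota_\delta(P)\cap\Gamma_2$ cancel identically by symmetry (equal lengths, opposite outward normals $\mp\mathbf{n}$). On the half-disk $S^a$, parametrizing $\Bx = P+\delta(\cos\theta,\sin\theta)$ with $\theta\in[\pi/2,3\pi/2]$ and $\nu = (\cos\theta,\sin\theta)$, the elementary identities $\int_{\pi/2}^{3\pi/2}\cos\theta\,d\theta = -2$ and $\int_{\pi/2}^{3\pi/2}\sin\theta\,d\theta = 0$ give $\int_{S^a}\nabla H(P)\cdot\nu\,ds = -2\delta\,\p_{x_1}H(P)$.

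The relation (ii) is the main obstacle. My plan is to rescale by $\delta$, setting $\tilde\Bx = (\Bx-P)/\delta$, so that $S^a\cup\iota_\delta(P)$ becomes the bounded portion of the boundary of a limiting ``rod-end'' domain $\Omega_\infty$ (the unit half-disk glued along $\{0\}\times[-1,1]$ to the semi-infinite strip $(0,\infty)\times(-1,1)$), and the rescaled $\Kcal_1^*$ converges to the Neumann--Poincar\'e operator $\Kcal_\infty^*$ of $\Omega_\infty$. The correction term $\chi(\iota_\delta(P))\mathcal{A}_{S^f\cap\iota_\delta(P)}$ in the definition of $\Kcal_1^*$ is precisely what supplies the cross-facade interaction needed for this identification. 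The model identity $\int_{\p\Omega_\infty}\Kcal_\infty^*[\tilde\phi]\,d\tilde s = \tfrac12\int_{\p\Omega_\infty}\tilde\phi\,d\tilde s$ would be obtained by applying the divergence theorem to the single-layer potential $\Scal_{\p\Omega_\infty}[\tilde\phi]$ on $\Omega_\infty\cap B_R$ and sending $R\to\infty$; the truncation flux on the interior cross-section $\{R\}\times(-1,1)$ decays like $1/R$. The principal technical hurdle is justifying this limiting passage: one must show that the rescaled density $\tilde\psi_1$ decays sufficiently along the strip for the tail beyond the rescaled $\iota_\delta(P)$ to be $o(1)$. This is plausible since the symmetric/antisymmetric decomposition of the forcing $\nabla H(P)\cdot\tilde\nu$, combined with the preceding auxiliary lemma on antisymmetric densities on $S^f$, confines the symmetric part of $\tilde\psi_1$ to a neighborhood of the half-disk; after rescaling back the $o(1)$ error becomes the claimed $o(\delta)$.
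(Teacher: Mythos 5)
Your step (i) is exactly the paper's computation: the facade contributions over $\iota_\delta(P)$ cancel and the half-circle integral gives $\int_{S^a\cup\iota_\delta(P)}\nabla H(P)\cdot\nu\,ds=-2\delta\,\p_{x_1}H(P)$, with the sign flip at $Q$. Where you diverge is step (ii), and here the paper has a much cheaper device that you should note: instead of integrating the equation only over the cap $S^a\cup\iota_\delta(P)$ and then needing a \emph{local} half-trace identity for $\Kcal_1^*$, the paper integrates $(\lambda I-\Kcal_D^*)[\phi]=f$ over the \emph{entire closed curve} $\p D$, where the identity $\int_{\p D}\Kcal_D^*[\phi]\,ds=\frac12\int_{\p D}\phi\,ds$ is exact (it is just Fubini plus the Gauss identity $\int_{\p D}\p_{\nu_\Bx}G(\Bx-\By)\,ds_\Bx=\frac12$ for $\By\in\p D$, valid since $\p D$ is $C^{1,\alpha}$). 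This yields $(\lambda-\frac12)\int_{\p D}\phi=\int_{\p D}f$ with no error term; the only remaining work is to use the decomposition \eqnref{eq:lepfapp02} to see that, for $f$ supported on $S^a\cup\iota_\delta(P)$, the quantity $\int_{\p D}\phi$ agrees with $\int_{S^a\cup\iota_\delta(P)}(\lambda I-\Kcal_1^*)^{-1}[\nabla H(P)\cdot\nu]$ up to $o(\delta)$. Your blow-up to the semi-infinite rod-end $\Omega_\infty$ is therefore machinery the paper never needs, and it is precisely where your argument is incomplete: the convergence of the rescaled operator to $\Kcal_\infty^*$ and the decay of $\tilde\psi_1$ along the strip are asserted as plausible but not proved, and the truncation-flux estimate on $\{R\}\times(-1,1)$ requires exactly that unproved decay. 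If you want to stay within your local framework, there is a more elementary repair than the blow-up: swap the order of integration in $\int_{S^a\cup\iota_\delta(P)}\Kcal_1^*[\psi_1]\,ds$ and show directly that $\int_{S^a\cup\iota_\delta(P)}\frac{1}{2\pi}\frac{\la\Bx-\By,\nu_\Bx\ra}{|\Bx-\By|^2}\,ds_\Bx=\frac12+o(1)$ uniformly for $\By$ in the cap, by writing it as the full Gauss integral over $\p D$ minus the contribution of $\p D\setminus(S^a\cup\iota_\delta(P))$, which is $o(1)$ because $\la\Bx-\By,\nu_\Bx\ra=\Ocal(\delta)$ on the facade; since $\int_{S^a\cup\iota_\delta(P)}|\psi_1|=\Ocal(\delta)$, the resulting error is $o(\delta)$ as required. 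Either fix closes the gap; as written, step (ii) is the missing piece.
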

\begin{proof}
For any $f\in L^2(\p D)$, we consider the following boundary integral equation
\beq\label{eq:lepfapp01}
(\lambda I -\Kcal_D^*)[\phi]=f.
\eeq
By using the decomposition \eqnref{eq:leasyNP02} (see also \eqnref{eq:asymvphi02} and \eqnref{eq:bndintvp02}), one has
\beq\label{eq:lepfapp02}
\begin{split}
&\chi(S^a\cup \iota_\delta(P))(\lambda I -\Kcal_1^*+o(1))[\phi]+\chi(S^b\cup \iota_\delta(Q))(\lambda I -\Kcal_2^*+o(1))[\phi]\\
&+\mathcal{A}_{\Gamma_2, \Gamma_1}[\phi]+ \mathcal{A}_{\Gamma_1, \Gamma_2}[\phi]+\chi(\iota_{\delta^\epsilon}(P)\cup \iota_{\delta^\epsilon}(Q))\Ocal(\delta^{2(1-\epsilon)}) +\Ocal(\delta^2)=f, \quad 0<\epsilon<1.
\end{split}
\eeq
Note that $\p D$ is of $C^{1, \alpha}$. By taking the boundary integral of both sides of \eqnref{eq:lepfapp01} on $\p D$ and making use of \eqnref{eq:lepfapp02}, one then has
\beq\label{eq:lepfapp03}
\begin{split}
\Big(\lambda-\frac{1}{2}\Big)\int_{\p D}\phi=&\int_{S^a\cup \iota_\delta(P)}(\lambda I -\Kcal_1^*+o(1))[\phi]+\int_{S^b\cup \iota_\delta(Q)}(\lambda I -\Kcal_2^*+o(1))[\phi]\\
&+\int_{\Gamma_1}\mathcal{A}_{\Gamma_2, \Gamma_1}[\phi]+\int_{\Gamma_2}\mathcal{A}_{\Gamma_1, \Gamma_2}[\phi]+o(\delta)=\int_{\p D} f.
\end{split}
\eeq
By assuming $f=\chi(S^a\cup\iota_{\delta}(P))\nabla H(P)\cdot\nu$ and plugging into \eqnref{eq:lepfapp03}, one thus has
\beq
\Big(\lambda-\frac{1}{2}\Big)\int_{S^a\cup \iota_{\delta}(P)}(\lambda I -\Kcal_1^*+o(1))^{-1}[\nabla H(P)\cdot\nu]=\int_{S^a\cup \iota_{\delta}(P)} \nabla H(P)\cdot\nu=-2\delta\p_{x_1} H(P),
\eeq
which verifies the first equation in \eqnref{eq:leapp01}. Similarly, by assuming $f=\chi(S^b\cup\iota_{\delta}(Q))\nabla H(q)\cdot\nu$, one can prove the second equation in \eqnref{eq:leapp01}. The proof is complete.
\end{proof}
\subsection{Sharp asymptotic approximation of the solution $u$}

With Lemmas \ref{le:main01}, \ref{le:main02} and \ref{le:app01}, we can now establish one of the main results of this paper as follows.

\begin{thm}
Let $u$ be the solution to \eqnref{eq:helm01} and \eqnref{eq:defGs}, with $D$ of the rod-shape described in Section~\ref{sect:1.1}. Then for $\Bx\in \RR^2\setminus\overline{D}$, it holds that
\beq\label{eq:thmanin01}
\begin{split}
u(\Bx)=&H(\Bx) + \delta \frac{1}{2\pi}\Big(\lambda-\frac{1}{2}\Big)^{-1}\int_{-L/2}^{L/2}\ln\frac{(x_1-y_1)^2+x_2^2}{(x_1+L/2)^2+x_2^2} \p_{y_2}^2H(y_1,0)\,dy_1\\
& \quad \quad \, + \delta \frac{1}{\pi}\Big(\lambda-\frac{1}{2}\Big)^{-1}\int_{-L/2}^{L/2}\frac{x_2}{(x_1-y_1)^2+x_2^2} \p_{y_2}H(y_1,0)\,dy_1 \\
&  \quad \quad \, + \delta \frac{1}{2\pi}\Big(\lambda-\frac{1}{2}\Big)^{-1}\ln\frac{(x_1-L/2)^2+x_2^2}{(x_1+L/2)^2+x_2^2}\p_{x_1}H(L/2,0)+o(\delta).
\end{split}
\eeq
\end{thm}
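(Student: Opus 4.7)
\medskip
\noindent\textbf{Proof proposal.} My plan is to insert the layer-potential representation $u=H+\Scal_D[\varphi]$ and work asymptotically on each of the three disjoint pieces of $\p D$ that appear in Lemma~\ref{le:main01}, namely $S^a\cup\iota_\delta(P)$, $S^b\cup\iota_\delta(Q)$, and the ``trunk'' $S^f\setminus(\iota_\delta(P)\cup\iota_\delta(Q))$. Since $\Bx\in\RR^2\setminus\overline{D}$ is fixed while $\delta\ll 1$, the kernel $G(\Bx-\By)=\frac{1}{2\pi}\ln|\Bx-\By|$ is smooth on each piece, and the whole task is to carry out the right Taylor expansion of $G(\Bx-\cdot)$ and then substitute the asymptotic profile of $\varphi$ from Lemma~\ref{le:main01}.

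For the two end-cap pieces the argument is short. Each of $S^a\cup\iota_\delta(P)$ and $S^b\cup\iota_\delta(Q)$ has diameter $\Ocal(\delta)$, so
\[
\int_{S^a\cup\iota_\delta(P)} G(\Bx-\By)\,\varphi(\By)\,ds_\By
=G(\Bx-P)\int_{S^a\cup\iota_\delta(P)}\varphi + \Ocal(\delta^2),
\]
and analogously at $Q$. Lemma~\ref{le:app01} (second version) then replaces those integrals by $-2\delta(\lambda-\tfrac12)^{-1}\p_{x_1}H(P)+o(\delta)$ and $2\delta(\lambda-\tfrac12)^{-1}\p_{x_1}H(Q)+o(\delta)$, producing logarithmic monopoles at $P$ and $Q$ with the desired prefactor $(\lambda-1/2)^{-1}$. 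This is the mechanism that ultimately delivers the third line of \eqref{eq:thmanin01}, once an $\ln|\Bx-P|$ piece is shuffled into the trunk integral via harmonicity of $H$.

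The trunk piece is the heart of the proof. I would parametrize $\Gamma_1$ and $\Gamma_2$ by $y_1\in(-L/2,L/2)$, set $\varphi_j(y_1):=\varphi(y_1,(-1)^j\delta)$, and use the antisymmetry identity derived just below \eqref{eq:bdint1202}, $\varphi_1+\varphi_2=\Ocal(\delta)$. Concretely, Lemma~\ref{le:main01} combined with Lemma~\ref{le:main02} (which makes $(\lambda I\pm A_\delta)^{-1}$ behave like the scalar $(\lambda\pm\tfrac12)^{-1}$ on smooth data away from the caps) gives
\[
\varphi_1+\varphi_2=2\delta(\lambda-\tfrac12)^{-1}\p_{y_2}^2 H(y_1,0)+o(\delta),\qquad
\varphi_2=(\lambda-\tfrac12)^{-1}\p_{y_2}H(y_1,0)+o(1).
\]
I then split the trunk contribution to $\Scal_D[\varphi](\Bx)$ as
\begin{align*}
\int_{S^f\setminus\cdots} G(\Bx-\By)\varphi(\By)\,ds_\By
&=\frac{1}{2\pi}\!\int_{-L/2}^{L/2}\!\bigl[\ln|\Bx-(y_1,\delta)|-\ln|\Bx-(y_1,-\delta)|\bigr]\varphi_2(y_1)\,dy_1\\
&\quad+\frac{1}{2\pi}\!\int_{-L/2}^{L/2}\!\ln|\Bx-(y_1,-\delta)|\,(\varphi_1+\varphi_2)(y_1)\,dy_1.
\end{align*}
In the first integral, a Taylor expansion in $\delta$ gives $\ln|\Bx-(y_1,\delta)|-\ln|\Bx-(y_1,-\delta)|=-\frac{2\delta x_2}{(x_1-y_1)^2+x_2^2}+\Ocal(\delta^2)$, which after multiplying by $\varphi_2$ reproduces exactly the second line of \eqref{eq:thmanin01}. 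In the second integral, replacing $\ln|\Bx-(y_1,-\delta)|$ by $\tfrac12\ln((x_1-y_1)^2+x_2^2)+\Ocal(\delta)$ and plugging in the $\p_{y_2}^2 H$ profile of $\varphi_1+\varphi_2$ gives a $\p_{y_2}^2 H$ logarithmic integral with the right $\delta(\lambda-\tfrac12)^{-1}/(2\pi)$ prefactor, but without the normalizing subtraction $\ln((x_1+L/2)^2+x_2^2)$.

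The last step, and the one I expect to be the subtlest bookkeeping, is reconciling the end-cap monopoles of paragraph two with this bare logarithmic integral to produce exactly the $\ln\!\frac{(x_1-y_1)^2+x_2^2}{(x_1+L/2)^2+x_2^2}$ kernel and the single remaining boundary term $\p_{x_1}H(L/2,0)$. The key identity is an integration by parts combined with harmonicity $\p_{y_2}^2 H(y_1,0)=-\p_{y_1}^2H(y_1,0)$, which yields
\[
\int_{-L/2}^{L/2}\!\ln((x_1+L/2)^2+x_2^2)\,\p_{y_2}^2 H(y_1,0)\,dy_1
=\ln((x_1+L/2)^2+x_2^2)\bigl[\p_{x_1}H(P)-\p_{x_1}H(Q)\bigr].
\]
Adding this harmless constant-in-$y_1$ piece into the trunk integral cancels the $\ln|\Bx-P|\,\p_{x_1}H(P)$ monopole from the left cap and converts the $\ln|\Bx-Q|\,\p_{x_1}H(Q)$ monopole from the right cap into precisely the $\ln\!\frac{(x_1-L/2)^2+x_2^2}{(x_1+L/2)^2+x_2^2}\p_{x_1}H(L/2,0)$ term on the third line of \eqref{eq:thmanin01}. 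The error accounting (the $o(\delta)$ remainders from the caps, the $\Ocal(\delta^2)$ from the kernel expansions, and the small-neighborhood remainders $\chi(\iota_{\delta^\epsilon}(P)\cup\iota_{\delta^\epsilon}(Q))\Ocal(\delta^{2(1-\epsilon)})$ in $\varphi$) all contribute at order $o(\delta)$ away from $\overline{D}$, completing the proof.
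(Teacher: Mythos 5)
Your proposal is correct and follows the same overall route as the paper: the representation $u=H+\Scal_D[\varphi]$, Taylor expansion of the kernel on the trunk (splitting into the antisymmetric part of $\varphi$, which yields the Poisson-kernel term, and the symmetric $\Ocal(\delta)$ part, which yields the logarithmic integral against $\p_{y_2}^2H$), and point-charge approximations at the two caps. The one place where you genuinely deviate is the left cap. The paper does \emph{not} invoke the cap lemma at $P$; instead it uses charge neutrality, $\int_{\p D}\varphi=0$, to write $\int_{S^a\cup\iota_\delta(P)}\varphi=-\int_{S^b\cup\iota_\delta(Q)}\varphi-\int_{S^f}\varphi$ (see \eqnref{eq:thpf05}), which attaches the factor $-\ln((x_1+L/2)^2+x_2^2)$ simultaneously to the $Q$-cap charge and to the trunk charge $2\delta(\lambda-\tfrac12)^{-1}\int\p_{y_2}^2H\,dy_1$, producing the normalized kernels in the first and third lines of \eqnref{eq:thmanin01} in one stroke. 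You instead apply Lemma \ref{le:app01} at both caps and then reconcile via the harmonicity identity $\int_{-L/2}^{L/2}\p_{y_2}^2H(y_1,0)\,dy_1=\p_{x_1}H(P)-\p_{x_1}H(Q)$; I checked the bookkeeping and your sum agrees exactly with the paper's, precisely because that identity is what makes the two cap lemmas consistent with charge neutrality. Your route has the small advantage of making this internal consistency explicit; the paper's is slightly shorter. One caveat worth flagging (it affects your write-up and the paper's equally): the leading, antisymmetric part of $\varphi$ on the trunk solves $(\lambda I+A_\delta)$-type equations per Lemma \ref{le:main01}, so by Lemma \ref{le:main02} one would expect the scalar $(\lambda+\tfrac12)^{-1}$ rather than the $(\lambda-\tfrac12)^{-1}$ you (and the statement's second line) assign to $\varphi_2$; you should either justify the sign or track it more carefully, since your parenthetical ``$(\lambda I\pm A_\delta)^{-1}$ behaves like $(\lambda\pm\tfrac12)^{-1}$'' is in tension with the formula $\varphi_2=(\lambda-\tfrac12)^{-1}\p_{y_2}H+o(1)$ that you then use.
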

\begin{proof}
By using \eqnref{eq:intsol01} and Taylor's expansion along with $\Gamma_0$, one has
\beq\label{eq:thpf01}
\begin{split}
u(\Bx)=&H(\Bx)+ \int_{S^f\setminus(\iota_{\delta}(P)\cup \iota_{\delta}(Q))}G(\Bx-\Bz_y)\varphi(\By)\,ds_\By \\
&\quad \quad \ \,+ \delta \int_{S^f\setminus(\iota_{\delta}(P)\cup \iota_{\delta}(Q))}\nabla_\By G(\Bx-\Bz_y)\cdot \nu_\By \varphi(\By)\,ds_\By\\
&\quad \quad \ \, + \int_{S^a\cup \iota_\delta(P)} G(\Bx-\Bz_y) \varphi(\By)\,ds_{\tilde\By}+ \int_{S^b\cup \iota_\delta(Q)} G(\Bx-\Bz_y) \tilde\varphi(\tilde\By)\,ds_{\tilde\By}+\Ocal(\delta^2).
\end{split}
\eeq
First, by using \eqnref{eq:bndit0101}, one can derive that
\beq\label{eq:thpf02}
\begin{split}
& \int_{S^f}G(\Bx-\Bz_y)\varphi(\By)\,ds_\By\\
=&2\delta \int_{\Gamma_1\setminus(\iota_{\delta}(P)\cup \iota_{\delta}(Q))}G(\Bx-\Bz_y)(\lambda I -A_\delta)^{-1}[\partial_{x_2}^2 H(\cdot, 0)](y_1)dy_1 +o(\delta)\\
=& \delta \frac{1}{2\pi}\Big(\lambda-\frac{1}{2}\Big)^{-1}\int_{-L/2}^{L/2}\ln((x_1-y_1)^2+x_2^2)\p_{y_2}^2H(y_1,0)\,dy_1+o(\delta).
\end{split}
\eeq
Similarly, one has
\beq\label{eq:thpf03}
\begin{split}
& \int_{S^f}\nabla_\By G(\Bx-\Bz_y)\cdot \nu_\By \varphi(\By)\,ds_\By\\
=& \frac{1}{\pi}\Big(\lambda-\frac{1}{2}\Big)^{-1}\int_{-L/2}^{L/2}\frac{x_2}{(x_1-y_1)^2+x_2^2} \p_{y_2}H(y_1,0)\,dy_1+o(1).
\end{split}
\eeq
By using Lemma \ref{le:app01}, one then obtains that
\beq\label{eq:thpf04}
\begin{split}
&\int_{S^b\cup \iota_\delta(Q)} G(\Bx-\Bz_y) \varphi(\By)\,ds_{\By}=\int_{S^b\cup \iota_\delta(Q)} G(\Bx-Q) \varphi(\By)\,ds_{\By}+o(\delta)\\
=&\delta\frac{1}{\pi}\ln|\Bx-Q|\Big(\lambda-\frac{1}{2}\Big)^{-1}\p_{x_1} H(Q)+o(\delta)\\
=&\delta\frac{1}{2\pi}\Big(\lambda-\frac{1}{2}\Big)^{-1}\ln((x_1-L/2)^2+x_2^2)\p_{x_1}H(L/2,0)+o(\delta).
\end{split}
\eeq
Noting that
$$
\int_{\p D}\varphi(\By)ds_\By=\int_{\p D}\Big(\lambda I- \Kcal_D^*\Big)^{-1}\Big[\frac{\p H}{\p \nu}\Big](\By)ds_\By=0,
$$
and by combining \eqnref{eq:thpf02}, one can readily show that
\beq\label{eq:thpf05}
\begin{split}
&\int_{S^a\cup \iota_\delta(P)} G(\Bx-\Bz_y) \varphi(\By)\,ds_{\By}=\int_{S^a\cup \iota_\delta(P)}  G(\Bx-P)\varphi(\By)\,ds_{\By}+o(\delta) \\
=&-\frac{1}{2\pi}\ln|\Bx-P|\Big(\int_{S^b\cup \iota_\delta(Q)} \varphi(\By)\,ds_{\By}\\
&\quad\quad \quad\quad+2\int_{\Gamma_1\setminus(\iota_{\delta}(P)\cup \iota_{\delta}(Q))}(\lambda I -A_\delta)^{-1}[\partial_{x_2}^2 H(\cdot, 0)](y_1)dy_1\Big) +o(\delta) \\
=&-\delta\frac{1}{2\pi}\Big(\lambda-\frac{1}{2}\Big)^{-1}\ln((x_1+L/2)^2+x_2^2)\p_{x_1}H(L/2,0)\\
&-\delta\frac{1}{2\pi}\Big(\lambda-\frac{1}{2}\Big)^{-1}\ln((x_1+L/2)^2+x_2^2)\int_{-L/2}^{L/2}\p_{y_2}^2H(y_1,0)\,dy_1+o(\delta).
\end{split}
\eeq
Finally, by substituting \eqnref{eq:thpf02}-\eqnref{eq:thpf05} into \eqnref{eq:thpf01} one has \eqnref{eq:thmanin01}, which completes the proof.
\end{proof}

We finally can derive the sharp asymptotic expansion of the electric field as follows.
\begin{thm}\label{thm:3.2}
Suppose $H(\Bx)=\mathbf{a}\cdot\Bx$, where $\mathbf{a}=(a_1, a_2)\in\mathbb{R}^2$. Then for $\Bx\in \RR^2\setminus\overline{D}$, the electric field $u$ satisfies
\beq\label{eq:thmain02}
\begin{split}
u(\Bx)=&\mathbf{a}\cdot\Bx+ \delta \frac{1}{\pi}\Big(\lambda-\frac{1}{2}\Big)^{-1}a_2 \left(\arctan\Big(\frac{L/2-x_1}{x_2}\Big)+ \arctan\Big(\frac{L/2+x_1}{x_2}\Big)\right)\\
&+\delta \frac{1}{2\pi}\Big(\lambda-\frac{1}{2}\Big)^{-1}a_1\ln\frac{(x_1-L/2)^2+x_2^2}{(x_1+L/2)^2+x_2^2}+o(\delta).
\end{split}
\eeq
Furthermore, the perturbed gradient field admits the following asymptotic expansion:
\beq\label{eq:thmain03}
\nabla u(\Bx)= \mathbf{a} + \delta \frac{1}{\pi}\Big(\lambda-\frac{1}{2}\Big)^{-1}
\left(
\begin{array}{c}
f_2(\Bx) a_1 -f_1(\Bx) a_2\\
f_1(\Bx) a_1 +f_2(\Bx) a_2
\end{array}
\right)+ o(\delta),
\eeq
where the functions $f_j$, $j=1, 2$ are defined by
\beq\label{eq:thmain04}
\begin{split}
& f_1(\Bx):=\frac{x_2}{(x_1-L/2)^2+x_2^2}-\frac{x_2}{(x_1+L/2)^2+x_2^2}\\
&f_2(\Bx):=\frac{x_1-L/2}{(x_1-L/2)^2+x_2^2}-\frac{x_1+L/2}{(x_1+L/2)^2+x_2^2}.
\end{split}
\eeq
\end{thm}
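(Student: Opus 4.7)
The plan is to deduce Theorem~\ref{thm:3.2} directly from the previous theorem by specialising to the affine background $H(\Bx)=\mathbf{a}\cdot\Bx$ and then differentiating. The two key simplifications that make this possible are: (i) $H$ is harmonic and linear, so all second derivatives vanish, and (ii) its first derivatives are constants, namely $\p_{x_1}H\equiv a_1$ and $\p_{x_2}H\equiv a_2$.

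First I would substitute $\p_{y_2}^2H(y_1,0)=0$, $\p_{y_2}H(y_1,0)=a_2$, and $\p_{x_1}H(L/2,0)=a_1$ into formula \eqref{eq:thmanin01}. The first line (the double-log integral) then drops out entirely, and the representation collapses to
\[
u(\Bx)=\mathbf{a}\cdot\Bx+\delta\frac{1}{\pi}\Big(\lambda-\tfrac12\Big)^{-1}a_2\!\!\int_{-L/2}^{L/2}\!\!\frac{x_2\,dy_1}{(x_1-y_1)^2+x_2^2}+\delta\frac{1}{2\pi}\Big(\lambda-\tfrac12\Big)^{-1}a_1\ln\frac{(x_1-L/2)^2+x_2^2}{(x_1+L/2)^2+x_2^2}+o(\delta).
\]
The remaining one-dimensional integral is elementary: with the substitution $t=(y_1-x_1)/x_2$ it becomes
\[
\Big[\arctan\tfrac{y_1-x_1}{x_2}\Big]_{-L/2}^{L/2}=\arctan\Big(\tfrac{L/2-x_1}{x_2}\Big)+\arctan\Big(\tfrac{L/2+x_1}{x_2}\Big),
\]
which yields \eqref{eq:thmain02} immediately.

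Next, for \eqref{eq:thmain03}, I would take $\nabla$ of the closed-form expression in \eqref{eq:thmain02} componentwise. Using
\[
\p_{x_1}\!\arctan\!\Big(\tfrac{L/2\mp x_1}{x_2}\Big)=\mp\frac{x_2}{(x_1\mp L/2)^2+x_2^2},\qquad \p_{x_2}\!\arctan\!\Big(\tfrac{L/2\mp x_1}{x_2}\Big)=\frac{x_1\mp L/2}{(x_1\mp L/2)^2+x_2^2},
\]
and the obvious logarithmic derivatives $\p_{x_j}\ln((x_1\pm L/2)^2+x_2^2)$, one combines terms and checks that the arctan derivatives assemble into $\mp f_1$ and $f_2$ while the logarithmic derivatives assemble into $2f_2$ and $2f_1$, with the factor $2$ cancelling the $1/(2\pi)$ to produce $1/\pi$. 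Collecting the coefficients in front of $a_1$ and $a_2$ for each component then gives the stated matrix structure in \eqref{eq:thmain03}.

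The whole argument is essentially a specialisation plus two routine derivative computations; there is no real analytic obstacle once Theorem~\ref{thm:3.2}'s predecessor is in hand. The only point requiring a bit of care is bookkeeping of signs when differentiating the arctan terms (noting that $\arctan((L/2-x_1)/x_2)$ and $\arctan((L/2+x_1)/x_2)$ differentiate with opposite sign in $x_1$ but with the same sign structure in $x_2$), and matching the factors $1/\pi$ versus $1/(2\pi)$ so that the final coefficient in \eqref{eq:thmain03} is the uniform $\frac{1}{\pi}(\lambda-\tfrac12)^{-1}$.
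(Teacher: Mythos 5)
Your proposal is correct and coincides with the paper's own proof, which is exactly the specialisation of \eqref{eq:thmanin01} to $H(\Bx)=\mathbf{a}\cdot\Bx$ (killing the $\p_{y_2}^2H$ term, evaluating the elementary arctangent integral, and differentiating the resulting closed form). The only blemish is a sign in your displayed identity for $\p_{x_2}\arctan\big((L/2+x_1)/x_2\big)$, which equals $-(x_1+L/2)/\big((x_1+L/2)^2+x_2^2\big)$; with the correct sign the two arctangent $x_2$-derivatives indeed sum to $f_2$, as you assert in your final assembly, so the conclusion is unaffected.
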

\begin{proof}
The proof is given by using \eqnref{eq:thmanin01} together with direct computations.
\end{proof}

\subsection{Quantitative analysis and numerical illustrations}

Define the following vector field
\beq
\mathbf{E}^s:=\delta\frac{1}{\pi}\Big(\lambda-\frac{1}{2}\Big)^{-1}
\left(
\begin{array}{c}
f_2(\Bx) a_1 -f_1(\Bx) a_2\\
f_1(\Bx) a_1 +f_2(\Bx) a_2
\end{array}
\right).
\eeq
According to \eqnref{eq:thmain03}, $\mathbf{E}^s$ is the leading order term of the perturbed gradient field. It is noted that the distribution of $|\mathbf{E}^s|$ is
independent of the uniform gradient potential $\mathbf{a}$. In fact, one has
\beq\label{eq:re01}
|\mathbf{E}^s|^2=\delta^2\frac{1}{\pi^2}\Big(\lambda-\frac{1}{2}\Big)^{-2}(a_1^2+a_2^2)(f_1(\Bx)^2+f_2(\Bx)^2).
\eeq
Moreover, further computations show that
\beq\label{eq:re02}
f_1(\Bx)^2+f_2(\Bx)^2=\left(\frac{1}{|\Bx-Q|}-\frac{1}{|\Bx-P|}\right)^2+\frac{2}{|\Bx-P||\Bx-Q|}\left(1-\frac{\la \Bx-P, \Bx-Q\ra}{|\Bx-P||\Bx-Q|}\right).
\eeq
One can thus derive that $|\mathbf{E}^s|$ is maximized near the two caps (high curvature parts) of the inclusion $D$. In fact, near the caps one has
$$
|\Bx-P|=\delta+o(\delta), \quad \mbox{or} \quad |\Bx-Q|=\delta+o(\delta).
$$
By \eqnref{eq:re02} one then has
\beq\label{eq:re03}
f_1(\Bx)^2+f_2(\Bx)^2=\delta^{-2}(1+o(1)),
\eeq
while near the centering parts of the rod,
$$
f_1(\Bx)^2+f_2(\Bx)^2=\Ocal(1).
$$
To better illustrate the result, we next present some numerical solutions with different background fields. The parameters of the rod-shape inclusion are selected as follows:
\beq
\sigma_0=2, \quad L=10, \quad \delta=5*\tan(\pi/36)\approx 0.4374.
\eeq
We choose three different uniform background fields, i.e., $\mathbf{a}=(1,0), (0, 1), (1, 1)$, respectively, and plot the absolute values of the perturbed fields as well as the corresponding gradient fields, which are scaled for better display. It is clearly shown from Figure \ref{fig:perturbed_01} to Figure \ref{fig:perturbed_03} that the gradient fields behave much stronger near the high curvature parts of the inclusion.

\begin{figure}[h]
\begin{center}
{\includegraphics[width=0.4\textwidth]{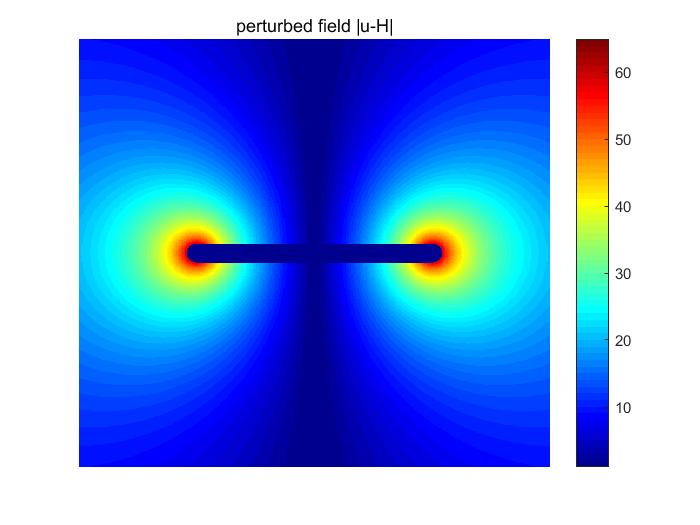}}
{\includegraphics[width=0.4\textwidth]{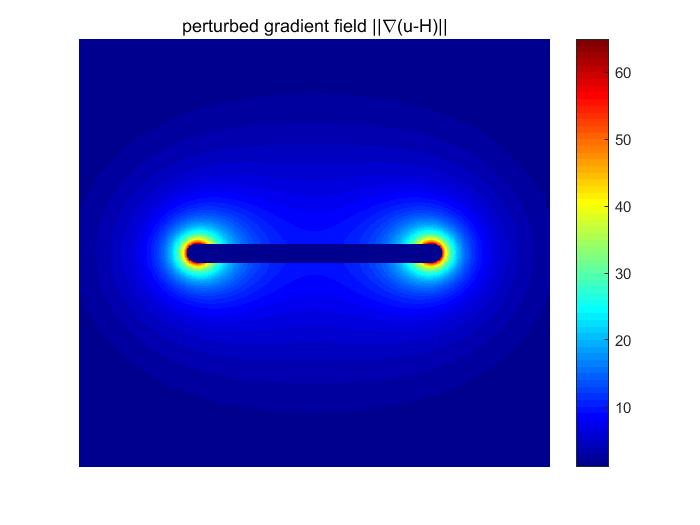}}
\end{center}
\caption{\label{fig:perturbed_01} $\mathbf{a}=(1,0)$. Left: Perturbed field $|u-\mathbf{a}\cdot\Bx|$ (scaled)   Right: Perturbed gradient field $|\nabla u-\mathbf{a}|$ (scaled). }
\end{figure}

\begin{figure}[h]
\begin{center}
{\includegraphics[width=0.4\textwidth]{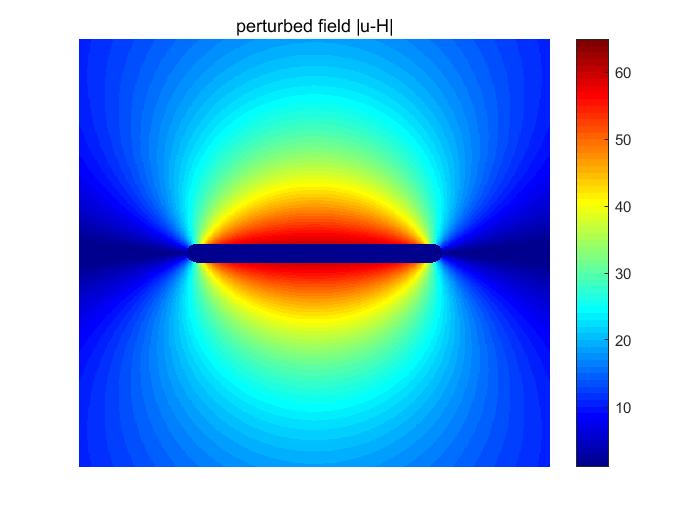}}
{\includegraphics[width=0.4\textwidth]{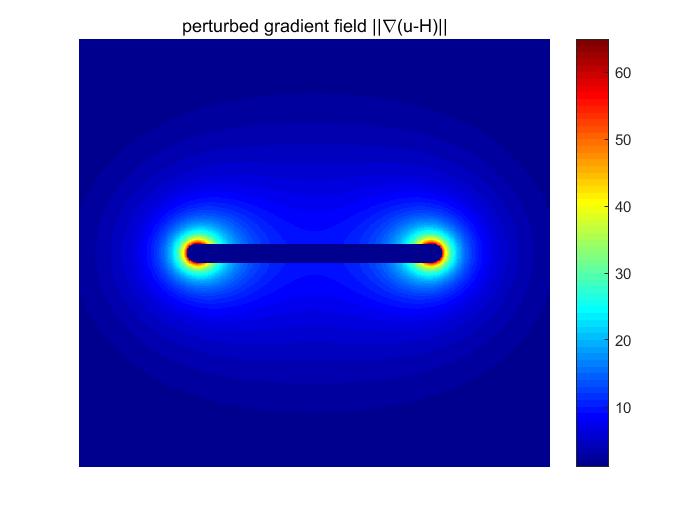}}
\end{center}
\caption{\label{fig:perturbed_02} $\mathbf{a}=(0,1)$. Left: Perturbed field $|u-\mathbf{a}\cdot\Bx|$ (scaled)   Right: Perturbed gradient field $|\nabla u-\mathbf{a}|$ (scaled). }
\end{figure}

\begin{figure}[h]
\begin{center}
{\includegraphics[width=0.4\textwidth]{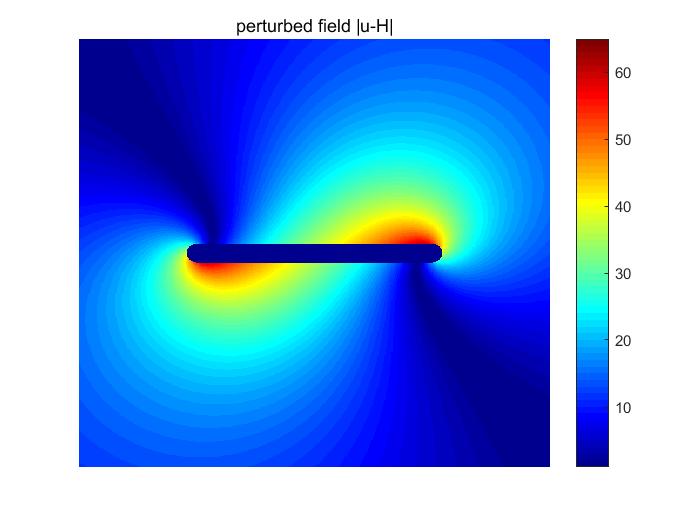}}
{\includegraphics[width=0.4\textwidth]{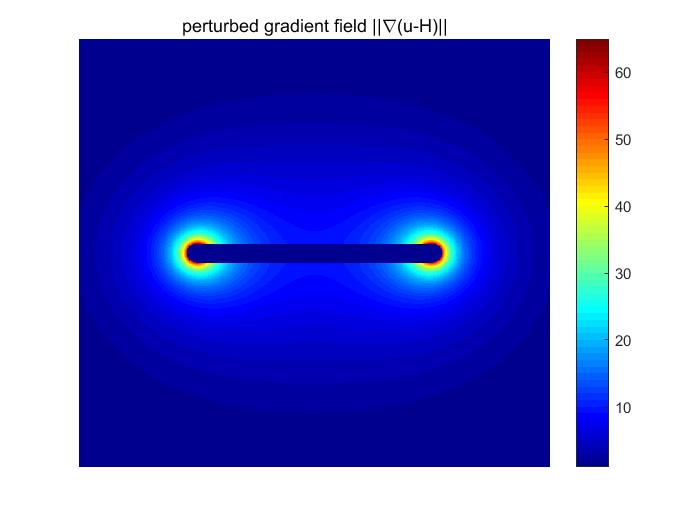}}
\end{center}
\caption{\label{fig:perturbed_03} $\mathbf{a}=(1,1)$. Left: Perturbed field $|u-\mathbf{a}\cdot\Bx|$ (scaled)   Right: Perturbed gradient field $|\nabla u-\mathbf{a}|$ (scaled). }
\end{figure}

\section{Application to Calder\'on inverse inclusion problem}

In this section, we consider the application of the quantitative results derived in the previous section to the Calder\'on inverse inclusion problem. To that end, we let $D$ denote a generic rod inclusion that is obtained through rigid motions performed on special case described in Section~\ref{sect:1.1}. We write $D(L, \delta, \mathbf{z}_0, \sigma_0)$ to signify its dependence on the length $L$, width $\delta$, position $\mathbf{z}_0$ (which is the geometric centre of $D$) as well as the conductivity parameter $\sigma_0$. Consider the conductivity system \eqref{eq:helm01} associated with a generic inclusion described above. The inverse inclusion problem is concerned with recovering the shape of the inclusion, namely $\partial D$, independent of its content $\sigma_0$, by measuring the perturbed electric field $(u-H)$ away from the inclusion. This is one of the central problems in EIT, which forms the fundamental basis for the electric prospecting. The case with a single measurement, namely the use of a single probing field $H$, is a longstanding problem in the literature. The existing results for the single-measurement case are mainly concerned with specific shapes including discs/balls and polygons/polyhedrons \cite{BFS,BFV,CHL,FKS,KS,LT,TT} as well as the other general shapes but with a-priori conditions; see \cite{A,AI,AIP,Bry,Fri,IP}. As discussed earlier, in \cite{LTY}, the local recovery of the highly-curved part of $\partial D$ was also considered. Next, using the asymptotic result quantitative result in Theorem~\ref{thm:3.2}, we shall show that one can uniquely determine a conductive inclusion up to an error level $\delta\ll 1$.

\begin{thm}\label{thm:final}
Let $D_j=D_j(L_j, \delta_j, \mathbf{z}_0^{(j)}, \sigma_0^{(j)})$, $j=1, 2$, be two conductive rods such that $L_j\sim 1, \delta_j\sim \delta\ll 1$ and $\sigma_0^{(j)}\sim 1$ for $j=1, 2$. Let $u_j$ be the corresponding solution to \eqref{eq:helm01} associated with $D_j$ and a given nontrivial $H(\mathbf{x})=\mathbf{a}\cdot \Bx$. Suppose that
\begin{equation}\label{eq:m1}
u_1=u_2\quad\mbox{on}\ \ \partial\Sigma,
\end{equation}
where $\Sigma$ is a bounded simply-connected Lipschitz domain enclosing $D_j$. Then it cannot hold that
\begin{equation}\label{eq:m2}
\mathrm{dist}(D_1, D_2)\gg \delta.
\end{equation}
\end{thm}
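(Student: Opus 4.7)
My plan combines unique continuation with the sharp asymptotic formula of Theorem~\ref{thm:3.2}. First I set $w := u_1 - u_2$. Since each $u_j - H$ is harmonic in $\RR^2 \setminus \overline{D_j}$ and satisfies $u_j - H = \Ocal(|\Bx|^{-1})$ at infinity, and since $\Sigma$ encloses $D_1\cup D_2$, $w$ is harmonic on the exterior $\RR^2\setminus\overline\Sigma$, vanishes on $\partial\Sigma$ by \eqref{eq:m1}, and decays at infinity. The maximum principle then forces $w\equiv 0$ on $\RR^2\setminus\overline\Sigma$. Because $w$ is real-analytic on the connected open set $\RR^2\setminus\overline{D_1\cup D_2}$, which contains $\RR^2\setminus\overline\Sigma$ as a nonempty open subset, unique continuation yields
\[
u_1(\Bx) = u_2(\Bx) \qquad \text{for all}\ \Bx\in \RR^2\setminus\overline{D_1\cup D_2}.
\]

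Next I would apply Theorem~\ref{thm:3.2} to each $u_j$ separately. Since each $D_j$ differs from the canonical rod of Section~\ref{sect:1.1} only by a rigid motion and $H(\Bx)=\mathbf{a}\cdot\Bx$ transforms covariantly under such motions, \eqref{eq:thmain02} becomes
\[
u_j(\Bx) = \mathbf{a}\cdot\Bx + \delta_j\Phi_j(\Bx) + o(\delta_j), \qquad \Bx\in\RR^2\setminus\overline{D_j},
\]
where $\Phi_j$ is the leading-order field of \eqref{eq:thmain02} translated and rotated so that its singular endpoints coincide with the tips $P_j,Q_j$ of the axis of $D_j$. The structural feature I plan to exploit is that $\Phi_j$ contains a term of the form $c_j\log(|\Bx-Q_j|/|\Bx-P_j|)$ with $c_j$ a nonzero multiple of the tangential component $\mathbf{a}\cdot\hat\ell_j$ (where $\hat\ell_j$ is the axis direction of $D_j$), together with a bounded arctan contribution. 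Subtracting the two expansions and combining with the preceding display then gives
\[
\delta_1\Phi_1(\Bx) - \delta_2\Phi_2(\Bx) = o(\delta), \qquad \Bx\in\RR^2\setminus\overline{D_1\cup D_2}.
\]

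I would argue by contradiction: suppose $\rho := \mathrm{dist}(D_1,D_2) \gg \delta$. After a rotation I may assume $\mathbf{a}\cdot\hat\ell_1 \neq 0$; if $\mathbf{a}$ were exactly perpendicular to both $\hat\ell_1$ and $\hat\ell_2$, the two axes would be parallel and a completely analogous argument would exploit the quantified jump of the arctan term across $P_1$ in place of the logarithm. Choose $\Bx^\star$ along the outward normal to $\partial D_1$ at $P_1$ with $|\Bx^\star - P_1| = d := \sqrt{\delta\rho}$. Since $d/\delta = \sqrt{\rho/\delta}\to\infty$, the point $\Bx^\star$ lies well outside $\overline{D_1}$; since $d/\rho = \sqrt{\delta/\rho}\to 0$, one has $\mathrm{dist}(\Bx^\star,D_2)\gtrsim \rho$ and $|\Bx^\star - Q_1|\sim L_1$. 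Consequently
\[
|\Phi_1(\Bx^\star)| \gtrsim \bigl|\log(d/L_1)\bigr| \gtrsim |\log\delta|,
\qquad
|\Phi_2(\Bx^\star)| \lesssim |\log\rho|+1,
\]
so that $|\Phi_1(\Bx^\star)-\Phi_2(\Bx^\star)| \gtrsim \log(\rho/\delta)\to\infty$. Therefore $|\delta_1\Phi_1(\Bx^\star) - \delta_2\Phi_2(\Bx^\star)| \gtrsim \delta\log(\rho/\delta)$, which contradicts the $o(\delta)$ bound already established.

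The main obstacle I anticipate is the uniformity of the $o(\delta)$ remainder of Theorem~\ref{thm:3.2} at field points lying within a distance $\sqrt{\delta\rho}$ of $\partial D_j$: the theorem is stated for fixed $\Bx\in\RR^2\setminus\overline{D_j}$ without explicit control as $\Bx$ approaches $\partial D_j$. Revisiting the layer-potential proofs of Lemmas~\ref{le:main01}--\ref{le:main02} and of \eqref{eq:thmain02}---in particular by rescaling the neighborhoods of $P_j$ and $Q_j$ by a factor $\delta$ and reapplying the Neumann--Poincar\'e estimates on the unit half-disk, and by controlling the facade contribution through the operator $A_\delta$---should yield a remainder of size $o(\delta)$ uniformly on $\{\mathrm{dist}(\Bx,\partial D_j)\ge \delta^{1-\epsilon}\}$ for any fixed $\epsilon>0$, which is more than enough for the scale $d=\sqrt{\delta\rho}$. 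Establishing this uniform bookkeeping is the only point of the argument that does not follow directly from the results already proved in Section~3.
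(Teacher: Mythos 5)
Your overall skeleton --- unique continuation from $\partial\Sigma$ to $\RR^2\setminus\overline{D_1\cup D_2}$, followed by a contradiction obtained by testing the identity $u_1=u_2$ near a cap of $D_1$, which is automatically at distance $\gtrsim\rho\gg\delta$ from $D_2$ --- is exactly the paper's. The difference is in what you compare there. The paper compares the perturbed \emph{gradient} fields at a cap point $\Theta_0\in\partial D_1$: by \eqref{eq:re01}--\eqref{eq:re03} the leading perturbation of $\nabla u_1$ has magnitude $\delta\cdot\delta^{-1}\sim 1$ there, \emph{independently of the orientation of} $\mathbf{a}$, while the perturbation of $\nabla u_2$ is $\lesssim\delta/\mathrm{dist}(\Theta_0,D_2)=o(1)$; this yields the contradiction in one line with no case analysis. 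You instead compare the \emph{potentials}, and this is where your argument has a genuine gap. From $|\Phi_1(\Bx^\star)|\gtrsim|\log\delta|$ and $|\Phi_2(\Bx^\star)|\lesssim|\log\rho|+1$ you conclude $|\Phi_1(\Bx^\star)-\Phi_2(\Bx^\star)|\gtrsim\log(\rho/\delta)$, but the triangle inequality only gives a lower bound of the form $\tfrac{1}{2}|c_1|\,|\log(\delta\rho)|-C_2(|\log\rho|+1)$, and the two logarithms can nearly cancel when $\log(1/\rho)$ is comparable to $\log(1/\delta)$ (e.g.\ $\rho=\delta^{1/2}$) and the log-coefficient of $\Phi_2$ (proportional to $\mathbf{a}\cdot\hat\ell_2$) exceeds that of $\Phi_1$: for suitable positions of $P_2,Q_2$ the two leading terms have the same sign and comparable size, and no contradiction is reached at a single point. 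The same coefficient-dependence is what forces your separate treatment of $\mathbf{a}\perp\hat\ell_1$, which you only sketch; note also that ``after a rotation'' cannot arrange $\mathbf{a}\cdot\hat\ell_1\neq 0$, since a rotation moves $\mathbf{a}$ and $\hat\ell_1$ together --- you mean a relabelling of $D_1$ and $D_2$.

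The repair is to exploit that $\Phi_1$ \emph{varies} much faster near $P_1$ than $\Phi_2$ does: either pass to gradients as the paper does (at your point $\Bx^\star$ one has $|\nabla\Phi_1|\sim 1/d$ versus $|\nabla\Phi_2|\lesssim 1/\rho$, whence $|\nabla u_1-\nabla u_2|\gtrsim\delta/d=\sqrt{\delta/\rho}\gg\delta$), or subtract the identity at two points at distance $\sim d$ from $P_1$, across which $\Phi_1$ changes by an amount $\gtrsim 1$ while $\Phi_2$ changes by $\Ocal(d/\rho)=o(1)$; either variant also handles all orientations of $\mathbf{a}$ at once because $f_1^2+f_2^2$ in \eqref{eq:re02} is orientation-free. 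On the credit side, your insistence on working at distance $d=\sqrt{\delta\rho}\gg\delta$ from $\partial D_1$, and your explicit flagging of the uniformity of the $o(\delta)$ remainder of Theorem~\ref{thm:3.2} near the boundary, is a substantive point: the paper evaluates the expansion \emph{on} $\partial D_1$ without comment, so the bookkeeping you propose is needed for the published argument as well.
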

\begin{proof}
First, by \eqref{eq:m1}, we know that $u_1=u_2$ in $\mathbb{R}^2\backslash \Sigma$ and hence by unique continuation, we also know that $u_1=u_2$ in $\mathbb{R}^2\backslash (D_1\cup D_2)$. Next, since the Laplacian is invariant under rigid motions, we note that the quantitative result in Theorem~\ref{thm:3.2} still holds for $D_j$. By contradiction, we assume that \eqref{eq:m2} holds. It is easily seen that there must be one cap point, say $\Theta_0\in \partial D_1$, which lies away from $D_2$ and $\mathrm{dist}(\Theta_0, D_2)\gg \delta$. Hence, one has $u_1(\Theta_0)=u_2(\Theta_0)$. Now, we arrive at a contradiction by noting that using Theorem~\ref{thm:3.2}, one has $u_1(\Theta_0)\sim 1$ whereas $u_2(\Theta_0)\sim \delta\ll 1$.

The proof is complete.

\end{proof}

\section*{Acknowledgments}
The work of X. Fang was supported by Humanities and Social Sciences Foundation of the Ministry of Education no. 20YJC910005, Major Project for National Natural Science Foundation of China no. 71991465, PSCF of Hunan No. 18YBQ077 and RFEB of Hunan No. 18B337.
The work of Y. Deng was supported by NSF grant of China No. 11971487 and NSF grant of Hunan No. 2020JJ2038.
The work of H. Liu was supported by a startup fund from City University of Hong Kong and the Hong Kong RGC General Research Funds, 12301218, 12302919 and 12301420.

\end{document}